\documentclass[11pt,twoside]{amsart}
\usepackage{amsmath, amsthm, amscd, amsfonts, amssymb, graphicx, color}
\usepackage[bookmarksnumbered, colorlinks, plainpages]{hyperref}

\setcounter{page}{1}


\newtheorem{thm}{Theorem}[section]
\newtheorem{cor}[thm]{Corollary}
\newtheorem{lem}[thm]{Lemma}

\newtheorem{cl}{Claim}
\numberwithin{equation}{section}

\newcommand{\U}{\mathcal{U}}
\newcommand{\X}{\mathcal{X}}
\newcommand{\Ll}{(\textbf{L}_{w})}
\begin{document}


\vspace{1.3 cm}

\title[Characterization of Lie centralizable mappings on $\mathcal{B}(\mathcal{X})$]{Characterization of Lie centralizable mappings on $\mathcal{B}(\mathcal{X})$}

\author{Behrooz Fadaee} 
\address{Department of Mathematics, Faculty of Science, University of Kurdistan, P.O. Box 416, Sanandaj, Kurdistan, Iran.}
\email{behroozfadaee@yahoo.com; b.fadaee@uok.ac.ir}

\author[Hoger Ghahramani]{ Hoger Ghahramani$^{\ast}$}
\address{Department of Mathematics, Faculty of Science, University of Kurdistan, P.O. Box 416, Sanandaj, Kurdistan, Iran.}
\email{hoger.ghahramani@yahoo.com; h.ghahramani@uok.ac.ir}

\author{Ayyoub Majidi} 
\address{Department of Mathematics, Faculty of Science, University of Kurdistan, P.O. Box 416, Sanandaj, Kurdistan, Iran.}
\email{ayyoubmajidi@gmail.com; a.majidi@uok.ac.ir}

\thanks{{\scriptsize
\hskip -0.4 true cm MSC(2020): Primary: 47B47, 47L10.
\newline Keywords: Lie centralizer, Algebras of operators on Banach spaces.
\newline $^{\ast}$ Corresponding author.
\\}}

\maketitle

\begin{center}
\end{center}

\begin{abstract}  
Let $\mathcal{B}(\mathcal{X})$ be the algebra of all bounded linear operators on a complex Banach space $\mathcal{X}$, and let $W\in \mathcal{B}(\mathcal{X}) $ is such that $\overline{W(\mathcal{X})}\neq \mathcal{X}$ or $W=\xi I$ where $\xi\in \mathbb{C}$ and $I$ be the identity operator. We show that if $\phi:\mathcal{B}(\mathcal{X}) \rightarrow \mathcal{B}(\mathcal{X})$ is an additive mapping Lie centralizable at $W$ (i.e. $\phi([A,B])=[\phi(A),B]=[A,\phi(B)]$ for any $A,B\in \mathcal{B}(\X) $ with $AB=W$), then $ \phi (A) = \lambda A + \mu (A) $ for all $A\in \mathcal{B}(\mathcal{X})$, where $ \lambda \in \mathbb{C} $ and $\mu: \mathcal{B}(\mathcal{X})\rightarrow \mathbb{C}I$ is an additive mapping such that $ \mu ([A,B]) = 0$ for all $A,B \in \mathcal{B}(\mathcal{X})$ with $AB=W$.
\end{abstract}

\vskip 0.2 true cm



\section{Introduction}
Let $\U$ be an algebra. The additive mapping $\phi:\U \rightarrow \U$ is said to be a \textit{Lie centralizer} if $\phi([a,b])=[\phi(a),b]$ for each $a,b\in \U$, where $[a,b]=ab-ba$ is the Lie product of $a$ and $b$ in $\U$. A routine check shows that $\phi$ is a Lie centralizer on $\U$ if and only if $\phi([a,b])=[a,\phi(b)]$ for all $a,b\in \U$. Lie centralizers are an important class of mappings related to the Lie structure of algebras. This is a classical notion in the theory of Lie and other non-associative algebras. We are only interested in the case, where associative algebras are endowed with the Lie product and create a Lie algebra. Lie centralizers have recently been studied a lot on algebras from different perspectives (For instance , see \cite{beh, fadghah3, fadghah4, fo, gh1, ghJordan, gh0, ghi0, ghi, jab, liu2} and the references therein). One of these study paths has been the characterization of Lie centralizers at specific products. To be precise, this study path consists of characterizing the mapping $\phi:\U \rightarrow \U$ that fulfills the following condition:
\[a,b\in\U, \,  ab=w\Longrightarrow \phi([a,b])=[\phi(a),b]=[a,\phi(b)] \quad \Ll,\]
where $w\in \U$ is fixed. The mapping $\phi$ satisfying $\Ll$ is called \textit{Lie centralizable at $w$}. Determined mappings with local properties have always been considered in mathematics. Among the local properties, we can mention mappings on rings or algebras that act on special products such as Lie centralizers. We refer to a number of studies conducted in this direction. The authors in \cite{beh} have studied the characterization of linear Lie centralizable mappings at  zero on non-unital triangular algebras. In \cite{ghahjing}, linear Lie centralizers at the zero products on some operator algebras are studied that $\mathcal{B}(\mathcal{X})$ (the algebra of all bounded linear operators on $\mathcal{X}$) is a class of these operator algebras. In \cite{fad3}, linear Lie centralizable mappings at zero on a 2-torsion free unital generalized matrix algebra under some mild conditions, are described. As a result of \cite{fad3} (or \cite{ghahjing}), it is proved that if $\mathcal{X}$ is a Banach space over the complex field $\mathbb{C}$ and $\phi:\mathcal{B}(\mathcal{X}) \rightarrow \mathcal{B}(\mathcal{X})$ is a linear mapping satisfying $(\textbf{L}_{0})$, then $\phi$ is in the form $ \phi (A) = \lambda A + \mu (A) $ for all $A\in \mathcal{B}(\mathcal{X})$, where $ \lambda \in \mathbb{C} $ and $\mu: \mathcal{B}(\mathcal{X})\rightarrow \mathbb{C}I$ is a linear map such that $ \mu ([A,B]) = 0$ for all $A,B \in \mathcal{B}(\mathcal{X})$ with $AB=0$. In \cite{fad2}, the problem of characterizing linear Lie centralizable mappings at a non-trivial idempotent element on triangular algebras is considered, and in \cite{goo}, additive Lie centralizable mappings at a non-trivial idempotent element on a 2-torsion free unital prime ring are determined. Also in \cite{ghahmalay}, additive Lie centralizable mappings at a non-trivial idempotent element on a unital Banach algebra have been studied. As a corollary of \cite{ghahmalay}, we have that if $\mathcal{X}$ is a complex Banach space, $P\in \mathcal{B}(\mathcal{X})$ is a non-trivial idempotent (i.e., $P^{2}=P, P\neq 0, I$) and the additive mapping $\phi:\mathcal{B}(\mathcal{X}) \rightarrow \mathcal{B}(\mathcal{X})$ satisfies $(\textbf{L}_{P})$, then $ \phi (A) = \lambda A + \mu (A) $ for all $A\in \mathcal{B}(\mathcal{X})$, where $ \lambda \in \mathbb{C} $ and $\mu: \mathcal{B}(\mathcal{X})\rightarrow \mathbb{C}I$ is an additive mapping such that $ \mu ([A,B]) = 0$ for all $A,B \in \mathcal{B}(\mathcal{X})$ with $AB=P$. In \cite{liu4}, the authors studied the linear Lie centralizable mappings on generalized matrix algebras at a specific point $w$ that includes non-trivial idempotents. To find more results in this regard, we refer to \cite{fos2, gh4, ghi0, ghi, liu3} and the references therein. In all the conducted studies, the Lie centralizable mappings have been characterized at the zero point or non-trivial idempotent element or the elements that are close to the non-trivial idempotents. Now, in continuation of these studies, in this article we want to determine the additive Lie centralizable mappings at points other than zero points and non-trivial idempotents on $\mathcal{B}(\mathcal{X})$, and also obtain a generalization of the previous results on $\mathcal{B}(\mathcal{X})$. More specifically, we will prove the following results.
\begin{thm}\label{image}
Let $\mathcal{B}(\mathcal{X})$ be the algebra of all bounded linear operators on a complex Banach space $\mathcal{X}$, and let $\phi:\mathcal{B}(\mathcal{X}) \rightarrow \mathcal{B}(\mathcal{X})$ be an additive mapping. Then the following are equivalent:
\begin{itemize}
\item[(i)] $\phi$ satisfies $(\textbf{L}_{W})$, where $W\in \mathcal{B}(\mathcal{X}) $ and $\overline{W(\mathcal{X})}\neq \mathcal{X}$;
\item[(ii)] $ \phi (A) = \lambda A + \mu (A) $ for all $A\in \mathcal{B}(\mathcal{X})$, where $ \lambda \in \mathbb{C} $ and $\mu: \mathcal{B}(\mathcal{X})\rightarrow \mathbb{C}I$ is an additive mapping such that $ \mu ([A,B]) = 0$ for all $A,B \in \mathcal{B}(\mathcal{X})$ with $AB=W$.
\end{itemize}
\end{thm}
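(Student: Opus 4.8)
We outline the proof of Theorem~\ref{image}.

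The implication (ii)$\Rightarrow$(i) is a one-line verification: if $\phi(A)=\lambda A+\mu(A)$ with $\mu(\mathcal{B}(\mathcal{X}))\subseteq\mathbb{C}I$ and $\mu([A,B])=0$ whenever $AB=W$, then for such $A,B$ the operators $\mu(A)$ and $\mu(B)$ are scalar, hence central, so $[\phi(A),B]=\lambda[A,B]=[A,\phi(B)]$, while $\phi([A,B])=\lambda[A,B]+\mu([A,B])=\lambda[A,B]$; the three expressions coincide. So all the content lies in (i)$\Rightarrow$(ii), which I would approach as follows (the case $\dim\mathcal{X}=1$ being trivial, assume $\dim\mathcal{X}\geq 2$). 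The first step converts the hypothesis $\overline{W(\mathcal{X})}\neq\mathcal{X}$ into an idempotent one: choose $e_0\notin\overline{W(\mathcal{X})}$ and, by Hahn--Banach, $f_0\in\mathcal{X}^{*}$ with $f_0(e_0)=1$ and $f_0|_{\overline{W(\mathcal{X})}}=0$; then $P:=e_0\otimes f_0$ is a non-trivial idempotent with $PW=0$ (equivalently $W^{*}f_0=0$), so $W=QW$ for $Q:=I-P$. I would then work with the Peirce decomposition of $\mathcal{B}(\mathcal{X})$ relative to $P$ and $Q$, recording that $P\mathcal{B}(\mathcal{X})P=\mathbb{C}P$ (as $P$ has rank one), that $P\mathcal{B}(\mathcal{X})Q$ and $Q\mathcal{B}(\mathcal{X})P$ are respectively the sets of rank-$\le 1$ operators $e_0\otimes g$ with $g(e_0)=0$ and $u\otimes f_0$ with $f_0(u)=0$, and that $Q\mathcal{B}(\mathcal{X})Q$ is a unital algebra (unit $Q$) with its own ample supply of idempotents and invertibles.

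The engine of the proof is a stock of admissible pairs. Besides the trivial $(I,W)$ and $(W,I)$ --- which at once give $[\phi(I),W]=0$ --- one has: the conjugation pairs $(S,S^{-1}W)$ and $(WS^{-1},S)$ for every invertible $S$; the pairs $(I+u\otimes f_0,\,W)$ for every $u\in\mathcal{X}$ (admissible because $f_0W=0$); and the corner pairs $\big(Q+QTQ,\,(Q+QTQ)^{-1}W\big)$, the inverse taken in $Q\mathcal{B}(\mathcal{X})Q$, whenever $Q+QTQ$ is invertible there. Inserting these into $(\textbf{L}_{W})$, expanding by additivity, and using that every operator in $\mathcal{B}(\mathcal{X})$ is a sum of two invertibles (so invertibles span additively), I would extract: the relation $[\phi(I),W]=0$; a master identity $\phi\big((Wx)\otimes f_0\big)=[W,\phi(x\otimes f_0)]=[\phi(W),x\otimes f_0]$ valid for all $x\in\mathcal{X}$; and companion identities controlling $\phi$ modulo $\mathbb{C}I$ on the corners $P\mathcal{B}(\mathcal{X})Q$, $Q\mathcal{B}(\mathcal{X})P$ and $Q\mathcal{B}(\mathcal{X})Q$. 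The point of this bookkeeping is to exhibit one scalar $\lambda\in\mathbb{C}$ with $\phi(A)-\lambda A\in\mathbb{C}I$ --- first for $A=P$ and for the rank-ones $x\otimes f_0$ and $e_0\otimes g$, then for every $A\in Q\mathcal{B}(\mathcal{X})Q$ (re-running the idempotent analysis inside the corner, in the spirit of \cite{ghahmalay}), and finally, since these pieces additively generate $\mathcal{B}(\mathcal{X})$, for every $A$. With $\lambda$ in hand, put $\mu(A):=\phi(A)-\lambda A$; then $\mu$ is additive, $\mu(\mathcal{B}(\mathcal{X}))\subseteq\mathbb{C}I$, and for $AB=W$ one computes $\mu([A,B])=\phi([A,B])-\lambda[A,B]=[\phi(A),B]-\lambda[A,B]=[\mu(A)+\lambda A,B]-\lambda[A,B]=[\mu(A),B]=0$, since $\mu(A)$ is central, which is exactly (ii).

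The main obstacle is the rigidity of the constraint $AB=W$: unlike the cases $W=0$ or $W$ a non-trivial idempotent (handled in \cite{ghahjing,fad3,ghahmalay}), one cannot fix one factor and solve freely for the other, so almost all the flexibility in the argument has to be generated from invertible elements that (via conjugation) fix $W$ and from invertibility inside the corner $Q\mathcal{B}(\mathcal{X})Q$; extracting from this restricted supply of pairs enough relations to control $\phi$ on all four Peirce blocks --- and, crucially, with one and the same scalar $\lambda$ on each --- is where the real work is. A secondary, characteristically additive, technicality is that the conjugation identities from $(S,S^{-1}W)$ are nonlinear in $S$, so the homogeneity of $\phi$ must be recovered en route (one gets $\mathbb{Q}$-homogeneity for free and upgrades it using the rank-one and idempotent relations) before the blockwise data can be assembled.
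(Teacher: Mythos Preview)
Your outline follows the same architecture as the paper's proof: the Hahn--Banach rank-one idempotent $P$ with $PW=0$, the Peirce decomposition with $P\mathcal{B}(\mathcal{X})P=\mathbb{C}P$, and the program of manufacturing pairs $(A,B)$ with $AB=W$ to pin down $\phi$ block by block until a single scalar $\lambda$ emerges. The verification of (ii)$\Rightarrow$(i) and the final computation $\mu([A,B])=0$ are exactly as in the paper.

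Where the proposal falls short is in the stock of admissible pairs. The pairs you list --- $(S,S^{-1}W)$, $(WS^{-1},S)$, $(I+u\otimes f_0,W)$, and the corner-invertible pairs --- are not the ones that actually drive the argument. The paper's proof depends on pairs whose \emph{left} factor carries the off-diagonal data while the \emph{right} factor is $W$ plus a correction annihilated by that left factor: for example $(P_2+A_{21})(W+P_1-A_{21})=W$, $(P_2+A_{21})(W-A_{21}A_{12}+A_{12})=W$, $(P_2-A_{21})(W+A_{21}+P_1)=W$, and $(A_{22}-A_{22}B_{21})(A_{22}^{-1}W+B_{21}+P_1)=W$. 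These exploit $A_{21}W=0$ (a consequence of $P_1W=0$) and are what force $P_1\phi(A_{21})P_2=0$, tie $P_1\phi(A_{21})P_1$ to $P_2\phi(A_{21})P_2$, and ultimately give $\psi(A_{22})B_{21}=\psi(A_{22}B_{21})$, which is how the single $\lambda$ propagates to $\U_{22}$. Your ``master identity'' $\phi((Wx)\otimes f_0)=[W,\phi(x\otimes f_0)]$ only sees $(Wx)\otimes f_0$, i.e.\ the part of $\U_{21}$ with first leg in $W(\mathcal{X})$, and your reference to ``re-running the idempotent analysis inside the corner, in the spirit of \cite{ghahmalay}'' does not by itself connect the corner scalar to the one on $\U_{11}$, $\U_{12}$, $\U_{21}$. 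So your remark that ``one cannot fix one factor and solve freely for the other'' somewhat misreads the mechanism: the paper \emph{does} fix one factor (typically $P_2+A_{21}$ or an invertible in $\U_{22}$) and then varies the other by adding terms the first factor kills.

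Finally, the worry about homogeneity is a red herring. The paper never recovers $\mathbb{C}$-linearity of $\phi$; it works purely additively throughout, using only that every element of a unital Banach algebra is a sum of two invertibles to pass from invertible $A_{22}$ to arbitrary $A_{22}$. The conclusion $\phi(A)=\lambda A+\mu(A)$ with $\mu$ merely additive is consistent with $\phi$ being nonhomogeneous, so no upgrade is needed.
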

The proof of this theorem is provided in Section 2. A consequence of this theorem is the case when $W$ is zero or non-trivial idempotent (Corollaries \ref{zero} and \ref{idempotent}). This result is also presented in the third section, which shows that Theorem \ref{image} is also a generalization of the previous results.
\par 
If we can prove Theorem \ref{image} in the case where $\overline{W(\mathcal{X})}= \mathcal{X}$, then we actually obtain a characterization of the additive mappings on $\mathcal{B}(\mathcal{X})$ satisfying in $(\textbf{L}_{W})$ for every point $W\in \mathcal{B}(\mathcal{X})$. We were only able to prove this in the special case of Theorem \ref{invertible}, and the general case remains an open problem.
\begin{thm}\label{invertible}
Let $\mathcal{B}(\mathcal{X})$ be the algebra of all bounded linear operators on a complex Banach space $\mathcal{X}$, and let $\phi:\mathcal{B}(\mathcal{X}) \rightarrow \mathcal{B}(\mathcal{X})$ be an additive mapping. Then the following are equivalent:
\begin{itemize}
\item[(i)] $\phi$ satisfies $(\textbf{L}_{\xi I})$, where $\xi\in \mathbb{C}$ is fixed and $I$ is the identity operator;
\item[(ii)] $ \phi (A) = \lambda A + \mu (A) $ for all $A\in \mathcal{B}(\mathcal{X})$, where $ \lambda \in \mathbb{C} $ and $\mu: \mathcal{B}(\mathcal{X})\rightarrow \mathbb{C}I$ is an additive mapping such that $ \mu ([A,B]) = 0$ for all $A,B \in \mathcal{B}(\mathcal{X})$ with $AB=\xi I$.
\end{itemize}
\end{thm}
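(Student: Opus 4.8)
The plan is to prove Theorem \ref{invertible} by separating the cases $\xi=0$ and $\xi\neq0$, and in the latter case reducing the hypothesis to a \emph{commuting-map} condition on $\mathcal{B}(\mathcal{X})$.

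The implication (ii)$\,\Rightarrow\,$(i) is immediate: if $\phi(A)=\lambda A+\mu(A)$ with $\mu$ valued in the centre $\mathbb{C}I$ and $\mu([A,B])=0$ whenever $AB=\xi I$, then for such $A,B$ every commutator against a central element drops out, and one gets $\phi([A,B])=\lambda[A,B]=[\phi(A),B]=[A,\phi(B)]$. For (i)$\,\Rightarrow\,$(ii) we first observe that when $\xi=0$ we have $\overline{(\xi I)(\mathcal{X})}=\{0\}\neq\mathcal{X}$, so the conclusion is exactly Theorem \ref{image}. If $\dim\mathcal{X}=1$ then $\mathcal{B}(\mathcal{X})=\mathbb{C}I$ is commutative and (ii) holds trivially with $\lambda=0$ and $\mu=\phi$; so from now on assume $\dim\mathcal{X}\geq2$ and $\xi\neq0$, in which case $W=\xi I$ is invertible.

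The heart of the argument is to show that $\phi$ is a commuting map, i.e.\ $[\phi(A),A]=0$ for all $A\in\mathcal{B}(\mathcal{X})$. Since $\phi$ is additive, $\phi(0)=0$. For any invertible $T$ we have $T\cdot(\xi T^{-1})=\xi I$ and $[T,\xi T^{-1}]=0$, so $(\textbf{L}_{\xi I})$ gives $0=\phi(0)=[\phi(T),\xi T^{-1}]$; as $\xi\neq0$, $\phi(T)$ commutes with $T^{-1}$, hence with $T$. Next fix a scalar $t\neq0$ and an arbitrary invertible $V$, and pick $\epsilon>0$ so small that $\|\epsilon V\|<|t|$; then $\epsilon V$ and $\epsilon V-tI$ are both invertible, so the previous step applies to each. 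Expanding $[\,\phi(\epsilon V-tI),\,\epsilon V-tI\,]=0$ by additivity, and using $[\phi(\epsilon V),\epsilon V]=0$ together with the fact that the scalar $tI$ is central, one is left with $[\phi(tI),V]=0$. Since every operator in $\mathcal{B}(\mathcal{X})$ is a sum of two invertible operators, this forces $\phi(tI)\in\mathbb{C}I$ for every $t$. Finally, given an arbitrary $A$, choose $t$ with $|t|>\|A\|$, so $A-tI$ is invertible; expanding $[\phi(A-tI),A-tI]=0$ and using $\phi(tI)\in\mathbb{C}I$ yields $[\phi(A),A]=0$. Now $\phi$ is a commuting additive map on the prime algebra $\mathcal{B}(\mathcal{X})$, whose extended centroid is $\mathbb{C}$, so by Bre\v{s}ar's theorem on commuting maps of prime rings there are $\lambda\in\mathbb{C}$ and an additive map $\mu:\mathcal{B}(\mathcal{X})\to\mathbb{C}I$ with $\phi(A)=\lambda A+\mu(A)$. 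The last requirement, $\mu([A,B])=0$ whenever $AB=\xi I$, then follows by substituting this form into $\phi([A,B])=[\phi(A),B]$ and cancelling the central terms.

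The step I expect to demand the most care is the extension from ``$\phi(T)$ commutes with $T$ for invertible $T$'' to ``$\phi$ is commuting on all of $\mathcal{B}(\mathcal{X})$'': the relation $(\textbf{L}_{\xi I})$ says nothing directly about $\phi$ at non-invertible operators (and, in contrast to the finite-dimensional situation, $AB=\xi I$ does not force $A$ and $B$ to be invertible), so the crucial intermediate fact is that $\phi$ sends every scalar operator $tI$ into the centre. A further point that should be handled carefully is the appeal to the commuting-map theorem and the identification of the extended centroid of $\mathcal{B}(\mathcal{X})$ with $\mathbb{C}$; alternatively one can argue in a self-contained way by linearising $[\phi(A),A]=0$ to $[\phi(A),B]=[A,\phi(B)]$ and evaluating at rank-one idempotents.
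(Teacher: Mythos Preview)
Your proposal is correct and follows essentially the same route as the paper: both arguments first show that $\phi$ is a commuting map by verifying $[\phi(T),T]=0$ on invertible $T$ via the factorisation $T(\xi T^{-1})=\xi I$, then bootstrap to all operators through a spectral-shift trick, and finally invoke Bre\v{s}ar's theorem on additive commuting maps of prime rings together with the identification of the extended centroid of $\mathcal{B}(\mathcal{X})$ with $\mathbb{C}$. The only cosmetic differences are that you explicitly dispose of the cases $\xi=0$ (via Theorem~\ref{image}) and $\dim\mathcal{X}=1$, and that your intermediate step showing $\phi(tI)$ is central uses an $\varepsilon$-scaling where the paper uses an integer shift $n>\|a\|$; neither changes the substance of the proof.
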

The proof of this theorem is given in Section 3. To prove this theorem, we first prove Lemma \ref{inverse} in which we characterize additive mappings on a unital Banach algebra $\U$ satisfying $\Ll$ for an invertible element $w\in \mathcal{Z}(\U)$, where $\mathcal{Z}(\U)$ is the center of $\U$. This characterization is in terms of additive commuting maps, which is an interesting result in itself. 

\section{Proof of Theorem \ref{image}}
In this section, we provide the proof of Theorem \ref{image} and some of its results. \\ \\
\textit{Proof of Theorem \ref{image}.}
(i)$\Rightarrow$(ii): If $dim\X=1$ then $\mathcal{B}(\X)=\mathbb{C}$ is commutative and it is enough to set $\mu:=\phi$ and $\lambda:=0$. Therefore the result is valid in this case. Now we assume $dim\X\geq 2$. Since  $ \overline{W(\X)}\neq \X $, there exists a nonzero element  $ x_{0}\in \X $ such that 
$ x_{0}\notin\overline{W(\X)} $. Suppose $ f_{0} \in X^{\ast} $ such that, $f_{0}(x_{0})=1$ 
and $ f_{0}(\overline{W(\X)})=0$. We set
$ P_{1}=x_{0}\otimes f_{0} $ and $ P_{2}=I-P_{1}. $ Then, $ P_{1} $ and $ P_{2} $ are nontrivial idempotents in $ \mathcal{B}(\X) $ such that $ P_{1}W=0 $ and $ P_{2}W=W $.\\
Consider the pierce decomposition of $ \mathcal{B}(\X) $ associated with the $ P_{1} $ as follows
\[ \mathcal{B}(\X)=\U_{11}+\U_{12}+\U_{21}+\U_{22} \]
in which $ \U_{ij}=P_{i}\mathcal{B}(\X)P_{j},(1\leqslant i,j\leqslant 2 ) $. 
Then $ \U_{11} $ and $ \U_{22} $ are unital Banach algebras with identities $ P_{1} $ and $ P_{2}$, respectively and $ \U_{12} $ is a unital Banach $ (\U_{11}, \U_{22})$-bimodule and $ \U_{21} $ is a unital Banach $ (\U_{22}, \U_{11})$-bimodule. We denote each element $ \U_{ij} $ by $ A_{ij}. $ It is clear that $ P_{1}=x_{0}\otimes f_{0} $ is a one-dimensional idempotent. Also for each $ A\in \mathcal{B}(\X) $ we have
\[ P_{1}AP_{1}=(x_{0}\otimes f_{0})A(x_{0}\otimes f_{0})=\lambda x_{0}\otimes f_{0} \]
for some $ \lambda \in \mathbb{C}. $ Thus $ \U_{11}=\mathbb{C}P_{1} $.
\par 
We complete the proof by proving the following steps.
\begin{cl}\label{l21}
For any $ A_{11} \in\U_{11} $ and $A_{22} \in\U_{22}$,
\[A_{22}\phi (A_{11})=\phi (A_{11}) A_{22} \]
and
\[ P_1 \phi (A_{11}) P_2 = P_2 \phi (A_{11} ) P_1 = 0. \]
\end{cl}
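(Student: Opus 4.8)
The plan is to exploit the relation $P_1 W = 0$ to feed carefully chosen pairs $(A,B)$ with $AB = W$ into the defining identity $(\textbf{L}_W)$. Since $A_{11}\in\U_{11}=\mathbb{C}P_1$ and $P_1 W = 0$, for any $A\in\mathcal{B}(\X)$ with $AB=W$ we have freedom to perturb $B$ by elements of $P_1\mathcal{B}(\X)=\U_{11}+\U_{12}$ without changing the product; more usefully, I would look for pairs whose product is $W$ and whose Lie bracket equals (a scalar multiple of) $A_{11}$ or lands in $\U_{11}$. Concretely, since $W = P_2 W$, write $W$ in Pierce form $W = W_{21}+W_{22}$ (note $P_1 W = 0$ kills the $\U_{11},\U_{12}$ components). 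Then for a fixed $A_{11}=\alpha P_1$ and any $A_{22}\in\U_{22}$, one can try $A = P_2 + A_{11} + T$ and $B = W + (\text{correction in } \U_{12}\text{ or }\U_{11})$ arranged so that $AB = W$ while $[A,B]$ is controlled. The key algebraic point is that $[A_{11},A_{22}]=0$ automatically (orthogonal corners), so the content of the claim is really that $\phi(A_{11})$ also commutes with every $A_{22}$, i.e. $\phi(A_{11})$ has no "off-diagonal interaction" with $\U_{22}$ and in fact $P_1\phi(A_{11})P_2 = P_2\phi(A_{11})P_1 = 0$.

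First I would establish the off-diagonal vanishing $P_1\phi(A_{11})P_2 = P_2\phi(A_{11})P_1 = 0$. For this, choose $A = P_2 + \beta P_1$ type elements together with $B$ supported so that $AB = W$ and $BA = W$ as well (so $[A,B]=0$), which by $(\textbf{L}_W)$ forces $[\phi(A),B]=0$ and $[A,\phi(B)]=0$; then compress these commutator identities with $P_1$ and $P_2$ on left and right. A cleaner route: since $P_1 W = 0$, the pair $A = P_1$, $B = W$ might not give $AB=W$, but $A = P_2 + P_1 A_{11}'$, $B = W$ with suitable $A_{11}'$ can be tuned. I would systematically run through pairs of the form $(P_2 + N, W + M)$ with $N,M$ in prescribed corners, impose $AB=W$, and read off from $\phi([A,B])=[\phi(A),B]$ the components of $\phi(P_1)$ and then of a general $A_{11}=\alpha P_1$ by scaling, using additivity. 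Comparing the $\U_{12}$-component of $[\phi(A_{11}),B]$ against the $\U_{12}$-component of $\phi([A_{11},B])$ — where the latter vanishes whenever $[A_{11},B]=0$ — should pin down $P_1\phi(A_{11})P_2$; symmetrically for $P_2\phi(A_{11})P_1$.

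Once the off-diagonal parts of $\phi(A_{11})$ are known to vanish, $\phi(A_{11})\in\U_{11}+\U_{22}$, and the remaining assertion $A_{22}\phi(A_{11}) = \phi(A_{11})A_{22}$ reduces to showing the $\U_{22}$-component $P_2\phi(A_{11})P_2$ is central in $\U_{22}$. (The $\U_{11}=\mathbb{C}P_1$ part is automatically central there.) To get this I would use pairs with $AB=W$ where $A$ carries an arbitrary $A_{22}$ and $B$ is built from $W$ and a correction in $\U_{21}$ so that $[A,B]\in\U_{11}$ or is scalar-ish, then compress $[\phi(A),B] = \phi([A,B])$ by $P_2\cdot P_2$; the $\U_{22}$-diagonal block of this identity will say exactly that $P_2\phi(A_{11})P_2$ commutes with the arbitrary $A_{22}$ appearing in $A$. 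Additivity of $\phi$ lets me subtract off baseline terms (the value on $P_2$ alone) so only the $A_{11}$-dependent part survives.

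The main obstacle I anticipate is the bookkeeping of which pairs $(A,B)$ actually satisfy $AB=W$ given that $W$ is an arbitrary operator with $\overline{W(\X)}\neq\X$ (so $W$ need not be an idempotent or have any nice block structure beyond $P_1 W = 0$, equivalently $W\in\U_{21}+\U_{22}$): I must ensure enough such pairs exist to probe every corner of $\phi(A_{11})$, and that the corrections I add genuinely preserve the product. The fact that $\U_{11}=\mathbb{C}P_1$ is one-dimensional is the crucial structural simplification that makes $\phi$ restricted to $\U_{11}$ essentially determined by the single value $\phi(P_1)$, and I expect the proof to hinge on extracting the four Pierce components of $\phi(P_1)$ one at a time from four well-chosen instances of $(\textbf{L}_W)$, then invoking additivity and the $[P_1, A_{22}]=0$ identities to conclude. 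Care must be taken that whenever I want to conclude $\phi([A,B])=0$ I have genuinely arranged $[A,B]=0$ (not merely $AB=W$), which typically requires also controlling $BA$.
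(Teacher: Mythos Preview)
Your proposal is a strategy outline, not a proof, and the concrete pairs you do suggest do not deliver the claim. The pair $(A,B)=(P_2+\beta P_1,\,W)$ does satisfy $AB=W$, but applying $(\textbf{L}_W)$ to it only relates $\phi(P_1)$ to the single fixed operator $W$ (you get identities like $[\phi(P_1),W]=-\phi(W_{21})$), never to an \emph{arbitrary} $A_{22}$; so nothing forces $[\phi(A_{11}),A_{22}]=0$. Your remark that ``for any $A$ with $AB=W$ we have freedom to perturb $B$ by elements of $P_1\mathcal{B}(\X)$'' is also not correct as stated: such a perturbation preserves the product only when $AP_1=0$, i.e.\ when $A\in\U_{12}+\U_{22}$, not for general $A$. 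Finally, your plan reverses the natural order: you aim to kill the off-diagonal pieces of $\phi(A_{11})$ first and only then prove centrality of $P_2\phi(A_{11})P_2$ in $\U_{22}$, but neither step is executed.

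The missing idea is to put an arbitrary $A_{22}$ in the \emph{first} slot and manufacture the second factor from $W$. For invertible $A_{22}\in\U_{22}$ (with inverse $A_{22}^{-1}$ taken in $\U_{22}$) one has $A_{22}(A_{22}^{-1}W)=W$ and, since $A_{22}P_1=0$, also $A_{22}(A_{22}^{-1}W+\lambda P_1)=W$ for every $\lambda\in\mathbb{C}$. Apply the $[A,\phi(B)]$ side of $(\textbf{L}_W)$ to both pairs; the left-hand sides agree because $[A_{22},\lambda P_1]=0$, and subtracting yields $[A_{22},\phi(\lambda P_1)]=0$. Since every element of the unital Banach algebra $\U_{22}$ is a sum of two invertibles, this extends to all $A_{22}\in\U_{22}$. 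Now $A_{22}\phi(A_{11})=\phi(A_{11})A_{22}$ is exactly the first assertion (recall $A_{11}=\lambda P_1$), and taking $A_{22}=P_2$ gives $P_2\phi(A_{11})=\phi(A_{11})P_2$, whence $P_1\phi(A_{11})P_2=P_2\phi(A_{11})P_1=0$ immediately. So the commutation comes first and the off-diagonal vanishing is a free corollary, which is simpler than the two-stage program you sketched.
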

\begin{proof}
For any invertible element $ A_{22}\in \U_{22}$ and $ 0 \neq \lambda \in \mathbb{C} $, since
$ A_{22}(A_{22}^{-1}W)=W $ and $A_{22} ( A_{22}^{-1}W + \lambda P_1 ) = W$ ($A_{22}^{-1}$ denotes the inverse of $A_{22}$ in $\U_{22}$), we have
\begin{equation}\label{eq1}
\phi ( [ A_{22} , A_{22}^{-1}W ] ) = [A_{22} , \phi (A_{22}^{-1}W) ]
\end{equation}
and 
\[ \phi ( [ A_{22} ,  A_{22}^{-1}W + \lambda P_1 ] ) = [ A_{22} , \phi (  A_{22}^{-1}W + \lambda P_1 ) ]. \]
Consequently,
\begin{equation}\label{eq2}
\phi ( [ A_{22} , A_{22}^{-1}W] ) = [ A_{22} , \phi (A_{22}^{-1}W) + \phi ( \lambda P_1 ) ] .
\end{equation}
Comparing \eqref{eq1} and \eqref{eq2}, we see that $ [ A_{22}, \phi ( \lambda P_1 ) ] = 0  $ for any invertible element $A_{22} \in\U_{22}$. By the fact that each element of a Banach algebra is a sum of two invertible elements we get $ [ A_{22}, \phi ( \lambda P_1 ) ] = 0  $ for all $A_{22} \in\U_{22}$. Hence 
$ P_2 \phi ( \lambda P_1 ) = \phi ( \lambda P_1 ) P_2 $. So
\begin{equation*}\label{eq3}
P_2 \phi ( \lambda P_1 ) P_1 = P_1 \phi ( \lambda P_1 ) P_2 = 0 .
\end{equation*}
Now, for any $ A_{11} \in \U_{11} $, since $ A_{11} = \lambda P_1 $ for some $ \lambda \in \mathbb{C} $, we get desired results.
\end{proof}
\begin{cl}\label{l22}
For any $ A_{11} \in\U_{11} $ and $A_{22} \in\U_{22}$,
\[A_{11}\phi (A_{22})=\phi (A_{22}) A_{11} \]
and
\[ P_1 \phi (A_{22}) P_2 = P_2 \phi (A_{22} ) P_1 = 0 . \]
\end{cl}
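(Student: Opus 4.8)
The plan is to mirror the proof of Claim \ref{l21} almost verbatim, simply exchanging the two halves of the hypothesis $(\textbf{L}_{W})$. Recall that $(\textbf{L}_{W})$ provides \emph{both} $\phi([A,B])=[\phi(A),B]$ and $\phi([A,B])=[A,\phi(B)]$ whenever $AB=W$; the proof of Claim \ref{l21} exploited the second of these, and here I would use the first. No new geometric input about $W$, $P_1$, $P_2$ is needed beyond what is already recorded, namely $P_{2}W=W$ and $P_1P_2=P_2P_1=0$.

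Concretely, fix an element $A_{22}\in\U_{22}$ invertible in the unital Banach algebra $\U_{22}$, and a scalar $0\neq\lambda\in\mathbb{C}$. Since $P_2W=W$ and $A_{22}P_1=0$, both $A_{22}(A_{22}^{-1}W)=W$ and $A_{22}(A_{22}^{-1}W+\lambda P_1)=W$ hold, so $(\textbf{L}_{W})$ gives
\[\phi\bigl([A_{22},A_{22}^{-1}W]\bigr)=[\phi(A_{22}),A_{22}^{-1}W]\qquad\text{and}\qquad\phi\bigl([A_{22},A_{22}^{-1}W+\lambda P_1]\bigr)=[\phi(A_{22}),A_{22}^{-1}W+\lambda P_1].\]
Because $[A_{22},\lambda P_1]=0$, the two left-hand sides coincide; subtracting, and using additivity of $\phi$ together with bilinearity of the Lie product, yields $[\phi(A_{22}),\lambda P_1]=0$. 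As every element of the Banach algebra $\U_{22}$ is a sum of two invertible elements and $\phi$ is additive, $[\phi(A_{22}),\lambda P_1]=0$ for \emph{all} $A_{22}\in\U_{22}$ and all $\lambda\in\mathbb{C}$; since $\U_{11}=\mathbb{C}P_1$, this is exactly $A_{11}\phi(A_{22})=\phi(A_{22})A_{11}$ for every $A_{11}\in\U_{11}$.

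Finally, taking $A_{11}=P_1$ in the identity just established gives $P_1\phi(A_{22})=\phi(A_{22})P_1$; multiplying this on the right by $P_2$ (respectively on the left by $P_2$) and using $P_1P_2=P_2P_1=0$ gives $P_1\phi(A_{22})P_2=0$ (respectively $P_2\phi(A_{22})P_1=0$), which completes the claim. I do not anticipate a genuine obstacle here: the only point requiring attention is recognizing that $(\textbf{L}_{W})$ supplies the $[\phi(\cdot),\cdot]$-form of the identity as well, so that the \emph{same} pair of factorizations already used for Claim \ref{l21} can be reused with the roles of the two Lie-centralizer identities interchanged.
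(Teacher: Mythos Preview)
Your proposal is correct and follows essentially the same route as the paper: both use the pair of factorizations $A_{22}(A_{22}^{-1}W)=W$ and $A_{22}(A_{22}^{-1}W+P_1)=W$ (the paper takes $\lambda=1$, which suffices since $[\phi(A_{22}),\lambda P_1]=\lambda[\phi(A_{22}),P_1]$), apply the $[\phi(\,\cdot\,),\,\cdot\,]$ side of $(\textbf{L}_W)$, and subtract to obtain $[\phi(A_{22}),P_1]=0$ for invertible $A_{22}$, then extend to all $A_{22}\in\U_{22}$ by the standard Banach-algebra trick. The only cosmetic difference is that the paper writes the extension via $nP_2-A_{22}$ with $n>\|A_{22}\|$ rather than invoking ``every element is a sum of two invertibles'', and your parameter $\lambda$ is superfluous here (though harmless).
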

\begin{proof}
For any invertible element $ A_{22}\in \U_{22}$, since
$ A_{22}( P_{1}+A_{22}^{-1}W )=W $
and
$ A_{22}(A_{22}^{-1}W)=W $, we have
\begin{equation}\label{G5}
\phi(\left[ A_{22},P_{1}+A_{22}^{-1}W\right])=\left[\phi(A_{22}), P_{1}+A_{22}^{-1}W\right] 
\end{equation}
and
\begin{equation}\label{G6}
\phi(\left[A_{22}, A_{22}^{-1}W\right])=\left[\phi(A_{22}), A_{22}^{-1}W \right]. 
\end{equation}
From \eqref{G5} and \eqref{G6}, we see that
\begin{equation}\label{G7} 
0=\phi(\left[A_{22}, P_{1}\right])=\left[\phi(A_{22}), P_{1}\right].  
\end{equation}
For each invertible element $ A_{22} $ in $\U_{22} $, from \eqref{G7}, It is obtained that
\[  P_{2}\phi(A_{22})P_{1}= P_{1}\phi(A_{22})P_{2}=0. \]
Now, for an arbitrary element $A_{22}$ of $\U_{22}$, we consider the non-negative integer $n$ as $n>\Vert A_{22} \Vert$. Since $nP_{2}-A_{22}$ is invertible in the Banach algebra $ \U_{22} $, it follows that 
\[ [\phi(A_{22}), P_{1}]=0\]
and 
\[ P_{2}\phi(A_{22})P_{1}= P_{1}\phi(A_{22})P_{2}=0 , \]
for any $ A_{22} \in \U_{22}$. Thus $[\phi(A_{22}), \lambda P_{1}]=0$ for every $\lambda\in \mathbb{C}$, and hence 
\[A_{11}\phi (A_{22})=\phi (A_{22}) A_{11} \]
for any $ A_{11} \in\U_{11} $ and $A_{22} \in\U_{22}$.
\end{proof}
\begin{cl}\label{l23}
For any $ A_{12} \in \U_{12} $,
\[ P_1 \phi (A_{12}) P_1 = P_2 \phi (A_{12} ) P_1 = P_2 \phi (A_{12} ) P_2 = 0 . \]
\end{cl}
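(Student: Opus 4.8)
The plan is to exploit two factorizations of $W$ through the idempotent $P_2$ that differ by exactly the element $A_{12}$ under consideration. First, since $P_1W=0$ we have $P_2W=W$, so the pair $(P_2,W)$ satisfies $P_2\cdot W=W$ and hence, by $(\textbf{L}_{W})$, $\phi([P_2,W])=[P_2,\phi(W)]$. Second, because $A_{12}\in\U_{12}=P_1\mathcal{B}(\X)P_2$ we have $P_2A_{12}=0$, so $P_2(W-A_{12})=P_2W=W$, and $(\textbf{L}_{W})$ applies to the pair $(P_2,W-A_{12})$ as well, giving $\phi([P_2,W-A_{12}])=[P_2,\phi(W-A_{12})]$.

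Next I would expand the second identity. Since $[P_2,A_{12}]=P_2A_{12}-A_{12}P_2=-A_{12}$, we have $[P_2,W-A_{12}]=[P_2,W]+A_{12}$, so by additivity of $\phi$ the left-hand side equals $\phi([P_2,W])+\phi(A_{12})$, while the right-hand side equals $[P_2,\phi(W)]-[P_2,\phi(A_{12})]$. Subtracting the first identity $\phi([P_2,W])=[P_2,\phi(W)]$ leaves the single relation $\phi(A_{12})=-[P_2,\phi(A_{12})]$, that is, $\phi(A_{12})=\phi(A_{12})P_2-P_2\phi(A_{12})$.

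Finally I would read off the Peirce components by multiplying this relation on the left and right by $P_1$ and $P_2$. Compressing to $\U_{11}$ and to $\U_{22}$ annihilates the right-hand side outright, yielding $P_1\phi(A_{12})P_1=0$ and $P_2\phi(A_{12})P_2=0$; compressing to $\U_{21}$ gives $P_2\phi(A_{12})P_1=-P_2\phi(A_{12})P_1$, and since $\mathcal{B}(\X)$ is an algebra over $\mathbb{C}$ (hence $2$-torsion free) this forces $P_2\phi(A_{12})P_1=0$, which is precisely the claim. The only genuine idea is spotting the pair of factorizations $P_2\cdot W=W$ and $P_2\cdot(W-A_{12})=W$; everything after that is routine Peirce bookkeeping, so I do not anticipate a real obstacle — the mild care needed is just to invoke $P_2A_{12}=0$ and $P_1W=0$ in the right places. (Applying $(\textbf{L}_{W})$ in its $[\phi(A),B]$ form to the same two pairs gives in addition $\phi(A_{12})=-[\phi(P_2),A_{12}]$, useful information for later claims but not required here.)
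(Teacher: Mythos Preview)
Your argument is correct. The key factorization idea---pairing $P_2\cdot W=W$ with $P_2\cdot(W\pm A_{12})=W$---is the same as the paper's, but you apply the $[A,\phi(B)]$ half of $(\textbf{L}_{W})$ to obtain $\phi(A_{12})=-[P_2,\phi(A_{12})]$, from which the three vanishing Peirce components follow by direct compression (using only $2$-torsion freeness for the $\U_{21}$ part). The paper instead applies the $[\phi(A),B]$ half, obtaining $\phi(A_{12})=-[\phi(P_2),A_{12}]$, and then invokes Claim~\ref{l22} to simplify this to the explicit formula $\phi(A_{12})=A_{12}\phi(P_2)P_2-P_1\phi(P_2)A_{12}$, which lies visibly in $\U_{12}$. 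Your route is slightly more economical and self-contained for the claim at hand; the paper's route has the advantage of producing that explicit expression for $\phi(A_{12})$, which is reused later (in the proof of Claim~\ref{l28}). As you note in your final parenthetical, both identities are available from the same two factorizations, so nothing is lost either way.
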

\begin{proof}
For any $ A_{11}\in \U_{11} $ and $ A_{12}\in \U_{12}$, since
\[  P_{2}(W+A_{11}+A_{12})=W , \quad and \quad  P_{2}W=W \]
we have
\begin{equation}\label{G10}
\phi(\left[P_{2}, W+A_{11}+A_{12} \right])=\left[\phi(P_{2}), W+A_{11}+A_{12} \right] 
\end{equation}
and
\begin{equation} \label{G11}
 \phi(\left[P_{2}, W \right]=\left[\phi(P_{2}), W \right].  
\end{equation}
Hence, we can infer from  \eqref{G10} and \eqref{G11}, that
\begin{equation}\label{G12}
- \phi(A_{12})=[ \phi(P_{2}) , A_{11} +  A_{12} ]
\end{equation}
for all $ A_{11}\in \mathcal{A}_{11} $ and $ A_{12}\in \mathcal{A}_{12} $. We substitute $ A_{12}=0 $ into \eqref{G12}, thus $ [ \phi(P_{2}), A_{11} ]=0 $ which implies
\[  \phi(A_{12})=A_{12} \phi(P_{2}) - \phi(P_{2}) A_{12} . \]
By using Claim \ref{l22}, we see that
\begin{equation}\label{G12b}
 \phi(A_{12}) =A_{12}\phi(P_{2})P_2-P_1\phi(P_{2}) A_{12} . 
\end{equation}
Multiplying \eqref{G12b} once from left and right to $ P_1$, once from left and right to $ P_2$, and once from left to $P_2$ and from right to $P_1$, we conclude that 
\[ P_1 \phi (A_{12}) P_1 = P_2 \phi (A_{12} ) P_1 = P_2 \phi (A_{12} ) P_2 = 0 . \]
\end{proof}
\begin{cl}\label{l24}
For any $ A_{21} \in \U_{21} $,
\[ P_1 \phi (A_{21}) P_2  = 0 . \]
\end{cl}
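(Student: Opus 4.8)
The plan is to mirror the argument of Claim \ref{l23}, but now exploiting the idempotent $P_1$ (rather than $P_2$) together with the relation $P_1 W = 0$. Concretely, for arbitrary $A_{21}\in\U_{21}$ and $A_{11}\in\U_{11}$ I would look for two product decompositions of $W$ that differ by a controlled $\U_{21}$-term. Since $P_2 W = W$ and $P_1 W = 0$, a natural choice is $(P_2 + A_{21})(W) = W + A_{21}W$; this is not quite $W$, so instead I would use that $\U_{21}$ is annihilated on the right by $P_1$ and work with products like $(P_2 + A_{21})(P_1 + W)$ versus $(P_2)(P_1 + W)$, both of which equal $W$ because $P_2 P_1 = 0$, $P_2 W = W$, $A_{21}P_1 = A_{21}$ and $A_{21} W = 0$ (the last because $W(\X)\subseteq \overline{W(\X)}$ and $A_{21}=P_2 A_{21} P_1$ kills $\overline{W(\X)}$... more carefully, $A_{21}W = P_2 A_{21} P_1 W = 0$ since $P_1 W = 0$). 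Wait — I should double-check: we need $A_{21}W = 0$, and indeed $A_{21} = A_{21}P_1$, so $A_{21}W = A_{21}P_1 W = A_{21}\cdot 0 = 0$. Good.

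So from $(P_2+A_{21})(P_1+W) = P_2 P_1 + P_2 W + A_{21}P_1 + A_{21}W = 0 + W + A_{21} + 0 = W + A_{21}$, that is still not $W$. The fix is to instead compare $(P_2 + A_{21})(P_1 + W - A_{21}W\cdot(\text{something}))$... this is getting complicated, so the cleaner route is: apply $\Ll$ to the pair $A = P_2 + A_{21}$, $B = P_1 + W$ only when this product equals $W$, which forces $A_{21}=0$; that is useless. Therefore I expect the correct device is the one used in Claim \ref{l23}: write $\phi([P_1, \cdot])$ both ways. Since $P_1(W + A_{11} + A_{21})$ — note $P_1 A_{21}=0$, $P_1 W = 0$, $P_1 A_{11}=A_{11}$ — equals $A_{11}$, not $W$, that is also not directly a ``$\,=W$'' relation. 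Instead I would use $A_{21}$-free elements multiplied on the left by something in $\U_{22}$: for invertible $A_{22}\in\U_{22}$ and any $A_{21}\in\U_{21}$, consider $A_{22}(A_{22}^{-1}W + A_{22}^{-1}A_{21}\cdot 0)$... The genuinely working choice, I believe, is $(P_2)(W) = W$ and $(P_2 + A_{21})(W) = W + A_{21}W = W$ since $A_{21}W=0$; so \emph{both} $(P_2)(W)=W$ and $(P_2+A_{21})(W)=W$ hold. Applying $(\textbf{L}_W)$ to each and subtracting gives $\phi([A_{21},W]) = [\phi(A_{21}),W] = [A_{21},\phi(W)]$ together with $\phi([P_2+A_{21},W]) - \phi([P_2,W]) = [\phi(P_2+A_{21}),W]-[\phi(P_2),W]$, which collapses to a relation expressing $\phi([A_{21},W])$ and hence, after peeling off with $P_1\cdot P_2$ corners and using Claims \ref{l21}--\ref{l23} plus $W=P_2 W$, yielding $P_1\phi(A_{21})P_2 = 0$.

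The key steps, in order, are: (1) record the identities $P_1 W = 0$, $P_2 W = W$, $A_{21} = P_2 A_{21} P_1$ and deduce $A_{21}W = 0$ and $W A_{21} = 0$ (the latter since $W = P_2 W$ and $P_2 A_{21}P_1$... actually $WA_{21}$ need not vanish, so I would only use $A_{21}W=0$); (2) feed the two decompositions $(P_2)(W)=W$ and $(P_2+A_{21})(W)=W$ into $(\textbf{L}_W)$ and subtract to isolate a formula for $\phi(A_{21})$ or for $[\phi(A_{21}),W]$ and $[A_{21},\phi(W)]$; (3) further combine with decompositions of the form $(P_2 + A_{21})(W + \lambda P_1)=W$ — legitimate since $(P_2+A_{21})(\lambda P_1) = \lambda A_{21}$, hmm that breaks it, so instead $(P_2+A_{21})$ applied to a vector killed appropriately — to remove the unknown $\phi(W)$ contribution; (4) multiply the resulting operator identity by the Peirce projections $P_1$ on the left and $P_2$ on the right, using Claims \ref{l21}, \ref{l22}, \ref{l23} to annihilate the cross terms, and conclude $P_1\phi(A_{21})P_2 = 0$.

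The main obstacle I anticipate is manufacturing \emph{two} genuine factorizations of $W$ through $\U_{21}$ that differ by exactly the term needed to expose the $(1,2)$-corner of $\phi(A_{21})$; unlike the $\U_{12}$ case in Claim \ref{l23}, where $P_2$ acts as a left identity on $W+A_{11}+A_{12}$, here $P_1$ does \emph{not} fix $W$, so the symmetric trick is unavailable and one must instead route through an invertible element of $\U_{22}$ and carefully track which corners survive. I expect the calculation to hinge on the observation that $A_{21}W=0$ while $A_{21}P_1 = A_{21}$, so that $W$ and $W+A_{21}$ (or $W$ realized via $(P_2+A_{21})W$) both serve as the ``$=W$'' constraint, and then the additivity of $\phi$ together with Claim \ref{l21} does the bookkeeping. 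Once the right pair of relations is in hand, steps (3)--(4) should be routine corner-multiplication.
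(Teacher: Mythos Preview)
Your exploration identifies several correct and useful facts --- in particular $A_{21}W=0$ (since $A_{21}=A_{21}P_1$ and $P_1W=0$) and hence $(P_2+A_{21})W=W$ --- but the proof plan you settle on has a genuine gap: the pair of factorizations $P_2\,W=W$ and $(P_2+A_{21})W=W$ does \emph{not} suffice to extract $P_1\phi(A_{21})P_2$. Feeding these into $(\mathbf L_W)$ and subtracting gives only
\[
\phi([A_{21},W])=[\phi(A_{21}),W]=[A_{21},\phi(W)],
\]
that is, $-\phi(WA_{21})=\phi(A_{21})W-W\phi(A_{21})$. Hitting this with $P_1$ on the left and $P_2$ on the right yields $P_1\phi(A_{21})(WP_2)=-P_1\phi(WA_{21})P_2$; the second form of the identity shows the right-hand side vanishes, but the left-hand side is $P_1\phi(A_{21})P_2\cdot(P_2WP_2)$, and since $P_2WP_2\in\U_{22}$ is an arbitrary (possibly zero, certainly not known to be invertible) element, you cannot cancel it. Your proposed step~(3), ``remove the unknown $\phi(W)$ contribution'', never gets an actual mechanism --- you yourself note that each candidate auxiliary factorization you try (e.g.\ $(P_2+A_{21})(W+\lambda P_1)$) fails to equal $W$.

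The missing idea is the one you nearly found. You computed $(P_2+A_{21})(P_1+W)=W+A_{21}$; the fix is simply to subtract $A_{21}$ from the second factor, since $(P_2+A_{21})A_{21}=P_2A_{21}+A_{21}A_{21}=A_{21}$. Thus
\[
(A_{21}+P_2)(W+P_1-A_{21})=W,
\]
and comparing this with $(A_{21}+P_2)W=W$ via $(\mathbf L_W)$ gives $0=\phi([A_{21}+P_2,\,P_1-A_{21}])=[\phi(A_{21})+\phi(P_2),\,P_1-A_{21}]$. Now there is no $W$ left in the commutator; using Claim~\ref{l22} to replace $\phi(P_2)$ by its block-diagonal form and then multiplying by $P_1$ on the left and $P_2$ on the right kills every term except $-P_1\phi(A_{21})P_2$, which is the paper's route.
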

\begin{proof}
For any $ A_{21}\in \U_{21} $, since
$ (A_{21}+P_{2})(W+P_{1}-A_{21})=W $ and $ (A_{21}+P_{2}) W=W$, we have
\[ \phi([A_{21}+P_{2}, W+P_{1}-A_{21} ])=[\phi(A_{21}+P_{2}), W+P_{1}-A_{21}] \]
and
\[
\phi([A_{21}+P_{2}, W ]=[\phi(A_{21}+P_{2}), W ]. \]
Now from previous equations and Claim \ref{l22}, it follows that 
\begin{equation*}
\begin{split}
0&=\phi( [A_{21}+P_{2}, P_{1}-A_{21} ]) \\
&=\left[\phi(A_{21}+P_{2}), P_{1} -  A_{21}\right] \\
&=\phi(A_{21})P_{1}-\phi(A_{21})A_{21}-\phi(P_{2})A_{21} \\
& \quad \quad \quad \quad  -P_{1}\phi(A_{21}) +A_{21}\phi(A_{21})+A_{21}\phi(P_{2})   \\
&= \phi(A_{21})P_{1}-\phi(A_{21})A_{21}-P_2 \phi(P_{2})A_{21}\\
& \quad \quad \quad \quad  -P_{1}\phi(A_{21})+A_{21}\phi(A_{21} )+A_{21}\phi(P_{2}) P_1 
\end{split}
\end{equation*}
for each $A_{21}$ in $\mathcal{A}_{21}$.
Now we multiply both sides of the above equation by $ P_{2} $ on the right and by $ P_{1} $ on the left. Therefore
\[ P_{1}\phi(A_{21})P_{2}=0  \]
for all $A_{21}$ in $\mathcal{A}_{21}$.
\end{proof}
\begin{cl}\label{l25}
There exsists an additive mapping $ h_{1}:\U_{11}\rightarrow \mathbb{C}I $ such that 
\[\phi(A_{11})-h_{1}(A_{11}) \in \U_{11}\] 
for any $ A_{11}\in \U_{11} $.
\end{cl}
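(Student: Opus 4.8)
The plan is to pin down $\phi(A_{11})$ for $A_{11}\in\U_{11}$ directly from Claim \ref{l21}, using only the fact that $\U_{22}$ is an algebra of all bounded operators on a Banach space and hence has center equal to the scalar multiples of its unit $P_2$.

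First I would fix $A_{11}\in\U_{11}$ and invoke the second displayed conclusion of Claim \ref{l21}, namely $P_1\phi(A_{11})P_2 = P_2\phi(A_{11})P_1 = 0$, which reduces the Pierce expansion of $\phi(A_{11})$ to
\[ \phi(A_{11}) = P_1\phi(A_{11})P_1 + P_2\phi(A_{11})P_2 . \]
Here $P_1\phi(A_{11})P_1\in\U_{11}=\mathbb{C}P_1$, so everything comes down to identifying $P_2\phi(A_{11})P_2\in\U_{22}$. From the first conclusion of Claim \ref{l21}, $A_{22}\phi(A_{11})=\phi(A_{11})A_{22}$ for every $A_{22}\in\U_{22}$; multiplying this on both sides by $P_2$ and using $P_2A_{22}=A_{22}=A_{22}P_2$ gives $A_{22}\bigl(P_2\phi(A_{11})P_2\bigr)=\bigl(P_2\phi(A_{11})P_2\bigr)A_{22}$ for all $A_{22}\in\U_{22}$, i.e. $P_2\phi(A_{11})P_2$ lies in the center of $\U_{22}$.

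Since $\dim\X\geq 2$, the closed subspace $P_2\X$ is nonzero, and $\U_{22}=P_2\mathcal{B}(\X)P_2$ is isomorphic as a unital algebra to $\mathcal{B}(P_2\X)$ via restriction; its center therefore equals $\mathbb{C}P_2$ (an operator in $\U_{22}$ commuting with every rank-one operator $y\otimes g$ with $y\in P_2\X$ and $g\in\X^{*}$ satisfying $g=g\circ P_2$ must be a scalar multiple of $P_2$). Hence there is a scalar $\beta(A_{11})\in\mathbb{C}$ with $P_2\phi(A_{11})P_2=\beta(A_{11})P_2$, and $A_{11}\mapsto\beta(A_{11})$ is additive because $\phi$ and the Pierce contractions $X\mapsto P_iXP_j$ are additive.

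Finally I would set $h_1(A_{11}):=\beta(A_{11})I$. Then $h_1:\U_{11}\to\mathbb{C}I$ is additive and
\[ \phi(A_{11})-h_1(A_{11})=P_1\phi(A_{11})P_1-\beta(A_{11})P_1\in\mathbb{C}P_1=\U_{11}, \]
which is exactly the claim. I do not expect a genuine obstacle here: the only point needing a moment's care is the computation of the center of $\U_{22}$, which goes through because $\U_{22}$ is the full operator algebra on a Banach space of dimension $\geq 1$, with no separability or finite-dimensionality assumptions required.
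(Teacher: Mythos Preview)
Your proof is correct and follows essentially the same approach as the paper: both use Claim \ref{l21} to eliminate the off-diagonal Peirce components of $\phi(A_{11})$ and to show that $P_2\phi(A_{11})P_2$ lies in the center of $\U_{22}$, then identify that center with $\mathbb{C}P_2$ via the isomorphism $\U_{22}\cong\mathcal{B}(P_2\X)$ (the paper writes this as $\mathcal{B}(\ker f_0)$, which is the same space), and finally define $h_1(A_{11})$ as the resulting scalar times $I$.
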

\begin{proof}
From Claim \ref{l21}, it follows that $P_2 \phi(A_{11}) A_{22}=A_{22} \phi(A_{11}) P_2$ for all $ A_{11} \in\U_{11} $ and $A_{22} \in\U_{22}$. So $P_2\phi(A_{11}) P_2\in \mathcal{Z}(\U_{22})$. Since $\U_{22}\cong \mathcal{B}(kerf_0)$ (algebraic isomorphism) and $P_2$ is the unity of $\U_{22}$, it is obtained that $\mathcal{Z}(\U_{22})=\mathbb{C}P_2$. Now, according to these results, for each $A_{11} \in\U_{11}$, there exists an $\lambda_{A_{11}}\in \mathbb{C}$ such that $P_2\phi(A_{11}) P_2=\lambda_{A_{11}}P_2$. Define the well-defined mapping $ h_{1}:\U_{11}\rightarrow \mathbb{C}I$ by $h_1(A_{11})=\lambda_{A_{11}}I$. Given the fact that $\phi$ is additive, it follows that $h_1$ is additive. From Claim \ref{l21} and the definition of $h_1$, we have
\begin{equation*}
\begin{split}
\phi(A_{11})-h_{1}(A_{11})&=P_1 \phi(A_{11})P_1 +P_2\phi(A_{11})P_2\\
& \quad \quad \quad \quad \quad -P_1 h_1(A_{11}) P_1 -P_2 h_1(A_{11}) P_2\\
&= P_1 \phi(A_{11})P_1 +\lambda_{A_{11}}P_2-\lambda_{A_{11}}P_1 -\lambda_{A_{11}}P_2\\
&=P_1 \phi(A_{11})P_1-\lambda_{A_{11}}P_1 \in \U_{11}
\end{split}
\end{equation*}
for all $ A_{11}\in \U_{11} $.
\end{proof}
\begin{cl}\label{l26}
There exsists an additive mapping $h_{2}:\U_{22}\rightarrow \mathbb{C}I $ such that
\[ \phi(A_{22})-h_{2}(A_{22}) \in \U_{22} \]
for any $ A_{22}\in \U_{22}$.
\end{cl}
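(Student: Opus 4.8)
The plan is to mirror the argument of Claim \ref{l25}, but the situation is in fact easier because $\U_{11}=\mathbb{C}P_1$ is one-dimensional, so no center computation is needed. First I would invoke Claim \ref{l22}: for every $A_{22}\in\U_{22}$ one has $P_1\phi(A_{22})P_2=P_2\phi(A_{22})P_1=0$, so $\phi(A_{22})$ decomposes as
\[ \phi(A_{22})=P_1\phi(A_{22})P_1+P_2\phi(A_{22})P_2 . \]
Since $P_1\phi(A_{22})P_1\in\U_{11}=\mathbb{C}P_1$, there is a (unique) scalar $\nu_{A_{22}}\in\mathbb{C}$ with $P_1\phi(A_{22})P_1=\nu_{A_{22}}P_1$.

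Next I would define $h_2:\U_{22}\to\mathbb{C}I$ by $h_2(A_{22})=\nu_{A_{22}}I$. This is well defined because $P_1\neq 0$ forces $\nu_{A_{22}}$ to be uniquely determined, and it is additive since $\phi$ is additive and the corner assignment $A_{22}\mapsto P_1\phi(A_{22})P_1=\nu_{A_{22}}P_1$ is additive. Finally I would compute, using the decomposition above and $I=P_1+P_2$,
\[ \phi(A_{22})-h_2(A_{22})=\nu_{A_{22}}P_1+P_2\phi(A_{22})P_2-\nu_{A_{22}}(P_1+P_2)=P_2\phi(A_{22})P_2-\nu_{A_{22}}P_2\in\U_{22}, \]
which is exactly the assertion.

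I do not expect a genuine obstacle here. Unlike Claim \ref{l25}, where one had to identify $\mathcal{Z}(\U_{22})$ with $\mathbb{C}P_2$ through the algebraic isomorphism $\U_{22}\cong\mathcal{B}(\ker f_0)$, the fact that $\U_{11}$ is already one-dimensional makes the extraction of the scalar $\nu_{A_{22}}$ immediate; the only remaining points are the routine verifications that $h_2$ is well defined and additive, both of which follow directly from the additivity of $\phi$ and from $P_1\neq 0$.
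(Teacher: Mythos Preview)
Your proposal is correct and follows essentially the same approach as the paper: both use Claim~\ref{l22} to reduce $\phi(A_{22})$ to its diagonal corners, extract the scalar from $P_1\phi(A_{22})P_1\in\U_{11}=\mathbb{C}P_1$, define $h_2(A_{22})$ as that scalar times $I$, and verify the difference lies in $\U_{22}$. The computations and the observation that this case is easier than Claim~\ref{l25} (no center identification needed) match the paper exactly.
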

\begin{proof}
Since $\U_{11}=\mathbb{C}P_1$, for every $A_{22}\in \U_{22}$ there exists an $\lambda_{A_{22}}\in \mathbb{C}$ such that $P_1\phi(A_{22}) P_1=\lambda_{A_{22}}P_1$. Define the well-defined mapping $ h_{2}:\U_{22}\rightarrow \mathbb{C}I$ by $h_2(A_{22})=\lambda_{A_{22}}I$. Considering that $\phi$ is additive, it follows that $h_2$ is additive. From Claim \ref{l22} and the definition of $h_2$, we have
\begin{equation*}
\begin{split}
\phi(A_{22})-h_{2}(A_{22})&=P_1 \phi(A_{22})P_1 +P_2\phi(A_{22})P_2- \lambda_{A_{11}}I\\
&=\lambda_{A_{22}}P_1+ P_2 \phi(A_{22})P_2 -\lambda_{A_{22}}P_1 -\lambda_{A_{22}}P_2\\
&=P_2 \phi(A_{22})P_2-\lambda_{A_{22}}P_2 \in \U_{22}
\end{split}
\end{equation*}
for all $ A_{22}\in \U_{22} $.
\end{proof}
\begin{cl}\label{l27}
There exsists an additive mapping $ h_{3}:\U_{21}\rightarrow \mathbb{C}I $ such that
\[ \phi(A_{21})-h_{3}(A_{21}) \in \U_{21}\]
for any $ A_{21}\in \U_{21} $.
\end{cl}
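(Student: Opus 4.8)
The plan is to pin down the ``block-diagonal part'' of $\phi(A_{21})$. Write $\phi(A_{21})=P_1\phi(A_{21})P_1+P_1\phi(A_{21})P_2+P_2\phi(A_{21})P_1+P_2\phi(A_{21})P_2$. By Claim~\ref{l24} the $\U_{12}$-block vanishes, and $P_1\phi(A_{21})P_1\in\U_{11}=\mathbb{C}P_1$ automatically; so the claim reduces to showing that $P_2\phi(A_{21})P_2\in\mathbb{C}P_2$ and that its scalar equals the one of $P_1\phi(A_{21})P_1$. Equivalently, the block-diagonal part of $\phi(A_{21})$ is $\lambda_{A_{21}}I$ for some $\lambda_{A_{21}}\in\mathbb{C}$, and then one sets $h_3(A_{21}):=\lambda_{A_{21}}I$.

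The engine is a perturbed factorization of $W$. For every invertible $A_{22}\in\U_{22}$ and every $A_{21}\in\U_{21}$, using only $P_1W=0$, one checks $(A_{22}+A_{21})(A_{22}^{-1}W)=W$ and $(A_{22}+A_{21})(A_{22}^{-1}W+P_1-A_{22}^{-1}A_{21})=W$; subtracting these shows $(A_{22}+A_{21})(P_1-A_{22}^{-1}A_{21})=0$, and a short Peirce computation (using $P_1A_{22}=P_1A_{21}=0$ and $A_{22}^{-1}A_{21}A_{22}=A_{22}^{-1}A_{21}A_{21}=0$) gives also $(P_1-A_{22}^{-1}A_{21})(A_{22}+A_{21})=0$, so $[A_{22}+A_{21},\,P_1-A_{22}^{-1}A_{21}]=0$. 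Applying $(\textbf{L}_{W})$ in the form $\phi([A,B])=[A,\phi(B)]$ to both factorizations and subtracting, the left-hand sides give $\phi(0)=0$; hence, with $B_{21}:=A_{22}^{-1}A_{21}\in\U_{21}$, one gets $[A_{22}+A_{21},\,\phi(P_1)-\phi(B_{21})]=0$. Since $[A_{22},\phi(P_1)]=0$ by Claim~\ref{l21} and $A_{21}=A_{22}B_{21}$, this rearranges to $[A_{22},\phi(B_{21})]=[A_{22}B_{21},\phi(P_1)]-[A_{22}B_{21},\phi(B_{21})]$.

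Next I would read off Peirce components. By Claims~\ref{l21} and~\ref{l25}, $\phi(P_1)=aP_1+bP_2$ for scalars $a,b$, and by Claim~\ref{l24}, $\phi(B_{21})=\lambda_{B_{21}}P_1+P_2\phi(B_{21})P_1+P_2\phi(B_{21})P_2$. Since $A_{22}B_{21}\in\U_{21}$, both commutators on the right side lie entirely in $\U_{21}$, so the $\U_{22}$-component of the identity reads $[A_{22},\,P_2\phi(B_{21})P_2]=0$. As this holds for all invertible $A_{22}\in\U_{22}$, hence (each element of $\U_{22}$ being a sum of two invertibles) for all $A_{22}\in\U_{22}$, we get $P_2\phi(B_{21})P_2\in\mathcal{Z}(\U_{22})=\mathbb{C}P_2$, say $=\nu_{B_{21}}P_2$. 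Taking $A_{22}=P_2$ in the $\U_{21}$-component then yields $P_2\phi(B_{21})P_1=(a-b-\lambda_{B_{21}}+\nu_{B_{21}})B_{21}$, so
\[ \phi(B_{21})=\lambda_{B_{21}}P_1+\nu_{B_{21}}P_2+(a-b-\lambda_{B_{21}}+\nu_{B_{21}})\,B_{21}, \]
where $B_{21}\mapsto\lambda_{B_{21}}$ and $B_{21}\mapsto\nu_{B_{21}}$ are additive because $\phi$ is.

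Finally I would force $\nu_{B_{21}}=\lambda_{B_{21}}$: the $\U_{21}$-component $B_{21}\mapsto(a-b-\lambda_{B_{21}}+\nu_{B_{21}})B_{21}$ is additive, so with $h:=\nu-\lambda$ (additive) the term $h(B_{21})B_{21}$ is additive in $B_{21}$; expanding $h(B_{21}+B'_{21})(B_{21}+B'_{21})$ and using additivity of $h$ gives $h(B_{21})B'_{21}+h(B'_{21})B_{21}=0$, and $B'_{21}=B_{21}$ yields $2h(B_{21})B_{21}=0$, whence $h\equiv0$. Then $\phi(B_{21})-\lambda_{B_{21}}I=(a-b)B_{21}\in\U_{21}$, and $h_3(A_{21}):=\lambda_{A_{21}}I$ (additive) proves the claim. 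The main obstacle is the combination of spotting the perturbation $P_1-A_{22}^{-1}A_{21}$ — chosen so that the second factorization has the \emph{same} (namely zero) Lie product as the first — and the careful Peirce bookkeeping that extracts $[A_{22},P_2\phi(B_{21})P_2]=0$; once the explicit form of $\phi(B_{21})$ is in hand, the closing additivity argument is routine.
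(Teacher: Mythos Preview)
Your argument is correct, but it follows a genuinely different route from the paper's. The paper perturbs $(P_{2}+A_{21})W=W$ by an element of $\U_{12}$, namely
\[(P_{2}+A_{21})(W-A_{21}A_{12}+A_{12})=W,\]
applies $(\textbf{L}_{W})$ in the \emph{left} form $\phi([A,B])=[\phi(A),B]$, and after compressing with $P_{1}$ on the left and $P_{2}$ on the right obtains $P_{1}\phi(A_{21})A_{12}=A_{12}\phi(A_{21})P_{2}$. Writing $P_{1}\phi(A_{21})P_{1}=\lambda_{A_{21}}P_{1}$, this reads $A_{12}(\lambda_{A_{21}}P_{2}-P_{2}\phi(A_{21})P_{2})=0$ for all $A_{12}$, and primeness of $\mathcal{B}(\X)$ immediately forces $P_{2}\phi(A_{21})P_{2}=\lambda_{A_{21}}P_{2}$; so the two diagonal scalars coincide in one stroke.

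You instead perturb by $P_{1}-A_{22}^{-1}A_{21}$ with an invertible $A_{22}\in\U_{22}$, use the \emph{right} form $\phi([A,B])=[A,\phi(B)]$, and extract from the $\U_{22}$-block that $P_{2}\phi(B_{21})P_{2}\in\mathcal{Z}(\U_{22})=\mathbb{C}P_{2}$; only afterwards do you match the two scalars, via the additivity trick $h(B_{21})B'_{21}+h(B'_{21})B_{21}=0$. This works (over $\mathbb{C}$, $2h(B_{21})B_{21}=0$ with $B_{21}\neq0$ and $h$ scalar-valued gives $h(B_{21})=0$), and as a by-product you obtain the explicit $\U_{21}$-component $P_{2}\phi(B_{21})P_{1}=(a-b)B_{21}$, a formula the paper only recovers later in Claim~\ref{l28}. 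The trade-off is that the paper's primeness argument is shorter and identifies the scalars directly, whereas your approach is a two-stage argument (centrality, then additivity) but yields finer structural information along the way.
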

\begin{proof}
For any $ A_{12}\in \U_{12} $ and $ A_{21} \in \U_{21} $, since
\[ (P_{2}+A_{21})(W-A_{21} A_{12}+A_{12})=W \]
and 
 \[ (P_{2}+A_{21})W=W ,\] 
we have
\[ \phi(\left[P_{2}+A_{21}, W-A_{21} A_{12}+A_{12} \right])=\left[\phi(P_{2}+A_{21}), W-A_{21} A_{12}+A_{12} \right] \]
and
\[ \phi(\left[P_{2}+A_{21}, W\right])=\left[\phi(P_{2}+A_{21}), W \right]. \]
Comparing the above two equations, we see that 
\[\phi(\left[P_{2}+A_{21}, -A_{21}A_{12}+A_{12} \right]) =\left[\phi(P_{2}+A_{21}), -A_{21}A_{12}+A_{12} \right]. \]
Thus
\begin{equation*}
\begin{split}
&\phi(A_{21} A_{12} ) + \phi( A_{21} A_{12}A_{21} )- \phi( A_{12} ) - \phi( A_{12}A_{21})=\\
&\quad-\phi(P_{2})A_{21} A_{12}+\phi(P_{2})A_{12}-\phi(A_{21})A_{21}A_{12}+\phi(A_{21}) A_{12} \\
  & \quad +A_{21}A_{12}\phi(P_{2})+A_{21}A_{12}\phi(A_{21})- A_{12}\phi(P_{2})-A_{12}\phi(A_{21}). 
\end{split}
\end{equation*}
Multiplying this equation on the right by $ P_{2} $ and on the left by $ P_{1}$, and using Claims \ref{l22} and \ref{l24}, we obtain
\begin{equation}\label{G16}
-\phi(A_{12}) =P_{1}\phi(P_{2}) A_{12}+P_{1}\phi(A_{21})A_{12} -A_{12}\phi(P_{2})P_{2}-A_{12}\phi(A_{21})P_{2}.
\end{equation} 
By setting $ A_{21}=0 $ in \eqref{G16}, we get
\[ -\phi(A_{12})=P_{1}\phi(P_{2})A_{12}-A_{12}\phi(P_{2})P_{2}. \]
So from \eqref{G16}, we see that 
\begin{equation}\label{G16b}
P_{1}\phi(A_{21})A_{12}=A_{12}\phi(A_{21}) P_{2} . 
\end{equation} 
Since $\U_{11}=\mathbb{C}P_1$, for every $A_{21}\in \U_{21}$ there exists a $\lambda_{A_{21}}\in \mathbb{C}$ such that $P_1\phi(A_{21}) P_1=\lambda_{A_{21}}P_1$. According to \eqref{G16b}, we arrive at 
\[\lambda_{A_{21}}A_{12}=A_{12}\phi(A_{21}) P_{2},  \]
and hence 
\[ A_{12}(\lambda_{A_{21}}P_2 -P_2 \phi(A_{21}) P_{2})=0\]
for all $A_{12}\in \U_{12} $. Since $\mathcal{B}(\X)$ is a prime algebra, it follows that 
\[ P_2\phi(A_{21}) P_2=\lambda_{A_{21}}P_2 \]
for all $A_{21}\in \U_{21} $. Define the well-defined mapping $ h_{3}:\U_{21}\rightarrow \mathbb{C}I$ by $h_3(A_{21})=\lambda_{A_{21}}I$. Given the fact that $\phi$ is additive, it follows that $h_3$ is additive. From Claim \ref{l24} and the definition of $h_3$, we have
\begin{equation*}
\begin{split}
\phi(A_{21})-h_{3}(A_{21})&=P_1 \phi(A_{21})P_1 +P_2\phi(A_{21})P_1+P_2 \phi(A_{21})P_2- \lambda_{A_{21}}I\\
&=\lambda_{A_{21}}P_1+ P_2 \phi(A_{21})P_1+ \lambda_{A_{21}}P_2-\lambda_{A_{21}}I \\
&=P_2 \phi(A_{21})P_1\in \U_{21}
\end{split}
\end{equation*}
for all $ A_{21}\in \U_{21} $.
\end{proof}
Now, for any $ A = A_{11} + A_{12} + A_{21} + A_{22} \in \mathcal{B}(\X)$, we define two additive mappings $ \mu : \mathcal{B}(\X) \to \mathbb{C}I $ and $ \psi : \mathcal{B}(\X) \to \mathcal{B}(\X)$ by
\[ \mu ( A ) = h_1 ( A_{11} ) +h_3 ( A_{21} ) +  h_2 ( A_{22} )  ~~~~ \text{and} ~~~~ \psi (A) = \phi (A) - \mu(A) . \]
By Claims \ref{l21}-\ref{l27}, it is clear that $ \psi (\U_{ij} ) \subseteq \U_{ij} $ for $ 1 \leq i \neq j \leq 2$, and $ \psi (A_{12} ) = \phi ( A_{12} ) $ for any $ A_{12}\in \U_{12} $. 
\begin{cl}\label{l28}
$ \psi $ satisfies $\psi(A)=\lambda A $ for all $A\in \mathcal{B}(\X)$, where $\lambda\in \mathbb{C}$.
\end{cl}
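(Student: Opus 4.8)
The plan is to exploit the Peirce decomposition already set up, settling first the off-diagonal corners and then the diagonal ones; throughout, the key simplification is that $\mu$, being $\mathbb{C}I$-valued, is annihilated by every Lie product and by every off-diagonal corner projection $P_{i}(\cdot)P_{j}$, $i\neq j$. Rewriting the identities obtained in Claims \ref{l21}--\ref{l27} via $\phi=\psi+\mu$ and using $\psi(\U_{ij})\subseteq\U_{ij}$, I would first record that
\[ \psi(A_{12})=A_{12}V,\qquad \psi(A_{21})=VA_{21}\qquad (A_{12}\in\U_{12},\ A_{21}\in\U_{21}), \]
where $V:=\psi(P_{2})\in\U_{22}$, while $\psi(A_{11})\in\mathbb{C}P_{1}$ and $\psi(A_{22})\in\U_{22}$.

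Next I would prove that $V=\lambda P_{2}$ for some $\lambda\in\mathbb{C}$. Applying $\Ll$ to the two factorizations $(P_{2}+A_{21})(W+A_{12}-A_{21}A_{12})=W$ and $(P_{2}+A_{21})W=W$ — both immediate from $P_{1}W=0$ and $P_{2}W=W$ — subtracting, rewriting via $\phi=\psi+\mu$, and reading off the $\U_{21}$-corner of the resulting identity, which removes the central term coming from $\mu$, one obtains $VA_{21}A_{12}A_{21}=A_{21}A_{12}VA_{21}$ for all $A_{12}\in\U_{12}$, $A_{21}\in\U_{21}$. Evaluating this on rank-one $A_{21}=z\otimes f_{0}$ (with $z\in\ker f_{0}$) and a suitable $A_{12}$ forces $Vz\in\mathbb{C}z$ for every $z\in\ker f_{0}$; since $\U_{22}\cong\mathcal{B}(\ker f_{0})$ and $\dim\X\geq 2$, this gives $V=\lambda P_{2}$, whence $\psi(A_{12})=\lambda A_{12}$ and $\psi(A_{21})=\lambda A_{21}$.

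For $\U_{22}$ I would apply $\Ll$ to the pair $A_{22}(A_{22}^{-1}W+D_{12})=W$ and $A_{22}(A_{22}^{-1}W)=W$, valid for every invertible $A_{22}\in\U_{22}$ and every $D_{12}\in\U_{12}$ (since $A_{22}D_{12}=0$). Subtracting and rewriting via $\phi=\psi+\mu$ — here $\mu$ disappears entirely, being blind to the $\U_{12}$-corner — gives $D_{12}\bigl(\psi(A_{22})-\lambda A_{22}\bigr)=0$ for all $D_{12}\in\U_{12}$, so $\psi(A_{22})=\lambda A_{22}$ by faithfulness of $\U_{12}$ as a right $\U_{22}$-module, first for invertible $A_{22}$ and then, by the sum-of-two-invertibles trick, for all $A_{22}\in\U_{22}$. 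For $\U_{11}$, when $W\neq 0$ I would arrange (already in the proof of Theorem \ref{image}) that the fixed vector $x_{0}\notin\overline{W(\X)}$ also satisfies $Wx_{0}\neq 0$, so that $W_{21}:=P_{2}WP_{1}=Wx_{0}\otimes f_{0}\neq 0$; then differencing $(P_{2}+A_{11})W=W$ and $P_{2}W=W$ yields $\phi(-\alpha W_{21})=[\psi(A_{11}),W]$ for $A_{11}=\alpha P_{1}$, and comparing $\U_{21}$-components (the central terms drop) forces $(\lambda\alpha-\gamma)W_{21}=0$, where $\psi(A_{11})=\gamma P_{1}$, i.e. $\psi(A_{11})=\lambda A_{11}$. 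Summing the four corners gives $\psi(A)=\lambda A$ for all $A\in\mathcal{B}(\X)$.

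The one genuinely delicate point is the $\U_{11}$-argument when $W_{21}=P_{2}WP_{1}=0$. For $W\neq 0$ this is bypassed by the choice of $x_{0}$ above, which is legitimate because a complex vector space is not the union of the two proper closed subspaces $\ker W$ and $\overline{W(\X)}$ (so a suitable $x_{0}$ exists, and Claims \ref{l21}--\ref{l27} hold for any such choice). For $W=0$ the choice is impossible, but then $\U_{11}$ is handled instead by the factorization $A_{11}A_{21}=W=0$: it gives $\phi(-\alpha A_{21})=[\psi(A_{11}),A_{21}]$ directly, and comparing $\U_{21}$-components again forces $\psi(A_{11})=\lambda A_{11}$ (using $\U_{21}\neq 0$). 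The only remaining thing to check is that the scalar $\lambda$ is literally the same on all four corners — which it is, since it is fixed once by $V=\lambda P_{2}$ and then merely propagated.
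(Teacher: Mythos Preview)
Your argument is correct, but it follows a noticeably different path from the paper's, and in a couple of places it is more laborious than necessary.

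The paper opens Claim~\ref{l28} with the single factorization $(P_{2}-A_{21})(W+A_{21}+P_{1})=W$ and exploits \emph{both} halves of $\Ll$ simultaneously: from $[\phi(P_{2}-A_{21}),\,A_{21}+P_{1}]$ one reads off $\psi(A_{21})=\psi(P_{2})A_{21}$, while from $[P_{2}-A_{21},\,\phi(A_{21}+P_{1})]$ one reads off $\psi(A_{21})=A_{21}\psi(P_{1})$. Since $\psi(P_{1})\in\U_{11}=\mathbb{C}P_{1}$, the scalar $\lambda$ and the identity $\psi(P_{2})=\lambda P_{2}$ drop out at once by primeness, with no rank-one analysis needed. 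By contrast, you recover $\psi(A_{21})=VA_{21}$ only by going back into the \emph{proof} of Claim~\ref{l24} (its statement alone does not suffice), and then you run a separate rank-one argument to force $V\in\mathbb{C}P_{2}$; this is valid but longer, and it never uses the $[A,\phi(B)]$ side of the hypothesis. For $\U_{22}$ your factorization $A_{22}(A_{22}^{-1}W+D_{12})=W$ is in fact a shade simpler than the paper's $(A_{22}-A_{22}B_{21})(A_{22}^{-1}W+B_{21}+P_{1})=W$. For $\U_{11}$, however, the paper again uses a single uniform factorization $(P_{2}+B_{21})(A_{11}-B_{21}A_{11}+W)=W$ that works for every $W$ with $P_{1}W=0$; your approach instead splits into $W=0$ and $W\neq 0$ and, in the latter case, requires going back to the very beginning of the proof of Theorem~\ref{image} to re-choose $x_{0}$ with $Wx_{0}\neq 0$. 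That re-choice is legitimate (your union-of-two-proper-subspaces remark is correct), but it means you are really modifying the setup of the whole theorem rather than proving Claim~\ref{l28} for the $P_{1},P_{2}$ already fixed. The net effect: both proofs are sound, the paper's is more uniform and self-contained within the claim, while yours trades the second half of $\Ll$ for extra combinatorics.
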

\begin{proof}
First, we show that $\psi(I)=\lambda I$ for some $\lambda\in \mathbb{C}$. For every $ A_{21}\in \U_{21} $, since
\[ (P_2-A_{21})(W+A_{21}+P_{1})=W \]
and
\[ (P_2-A_{21})W=W, \]
it follows that
\begin{equation*}
\begin{split}
\phi(\left[P_2-A_{21}, W+A_{21}+P_{1} \right])&=\left[\phi(P_2-A_{21}), W+A_{21}+P_{1} \right] \\&
=\left[P_2-A_{21}, \phi (W+A_{21}+P_{1}) \right]
\end{split}
\end{equation*}  
and
\[ \phi([P_2-A_{21},W])=[\phi(P_2-A_{21}),W ]=[P_2-A_{21},\phi(W) ]. \]
From these equations and the fact that $ \psi (\U_{ij} ) \subseteq \U_{ij} $ for $ 1 \leq i \neq j \leq 2$, we have
\begin{equation*}
\begin{split}
0&=\phi(\left[P_2-A_{21}, A_{21}+P_{1} \right] ) \\ 
&=\left[\phi(P_2-A_{21}),A_{21}+P_{1} \right]\\ 
&=\left[\psi(P_2-A_{21}), A_{21}+P_{1} \right]  \\ 
&=\psi(P_2)A_{21}-\psi(A_{21}) 
\end{split}
\end{equation*}
and 
\begin{equation*}
\begin{split}
0&=\phi(\left[P_2-A_{21}, A_{21}+P_{1} \right] ) \\ 
&=\left[P_2-A_{21}, \phi(A_{21}+P_{1}) \right]  \\ 
&=\left[P_2-A_{21}), \psi(A_{21}+P_{1}) \right] \\
&=\psi(A_{21})-A_{21}\psi(P_{1}) 
\end{split}
\end{equation*}
Thus
\begin{equation}\label{G16bb}
\psi(A_{21})=\psi(P_2)A_{21}=A_{21}\psi(P_{1}) 
\end{equation}  
for every $ A_{21}\in \U_{21} $. Since $\U_{11}=\mathbb{C}P_1$ and $\psi(P_{1})\in \U_{11}$, there exists a $\lambda \in \mathbb{C}$ such that $\psi(P_{1})=\lambda P_1$. According to \eqref{G16bb}, we get
\[\psi(P_2)A_{21}=\lambda A_{21} ,  \]
and hence 
\[ (\psi(P_2)-\lambda P_2)A_{21}=0\]
for all $A_{21}\in \U_{21} $. Since $\mathcal{B}(\X)$ is a prime algebra, it follows that 
\[\psi(P_2)=\lambda P_2.  \]
Consequentially,
\[\psi(I)=\psi(P_1)+\psi(P_2)=\lambda P_1+\lambda P_2=\lambda I. \]
From \eqref{G16bb} and $\psi(P_2)\in \U_{22}$, we see that 
\[\psi(A_{21})=A_{21}\psi(P_{1}) =A_{21}\psi(P_{1}) +A_{21}\psi(P_{2})=A_{21}\psi(I)=\lambda A_{21}  \]
for every  $ A_{21}\in \U_{21} $. Given that $ \psi (A_{12} ) = \phi ( A_{12} ) $ for any $ A_{12}\in \U_{12} $, it follows from \eqref{G12b} that
\[\psi(A_{12}) =A_{12}\phi(P_{2})P_2-P_1\phi(P_{2}) A_{12}.  \]
From the definition of $h_2$ we have $P_1\phi(P_{2})P_1=h_2(P_2)P_1$ and therefore from the definitions of $\psi$ and $\mu$ we have
\begin{equation*}
\begin{split}
\psi(A_{12}) &=A_{12}\phi(P_{2})P_2-h_2(P_2) A_{12}\\&
=A_{12}(\phi(P_2)-\mu(P_2))P_2\\&
=A_{12}\psi(P_2)
\end{split}
\end{equation*}
for every  $ A_{12}\in \U_{12} $. In view of the fact that $\psi(P_1)\in \U_{11}$, we obtain 
\[\psi(A_{12})=A_{12}\psi(P_{2}) =A_{12}\psi(P_{2}) +A_{12}\psi(P_{1})=A_{12}\psi(I)=\lambda A_{12}  \]
for every $ A_{12}\in \U_{12} $. For every invertible element $ A_{22}\in \U_{22} $ and $ B_{21}\in \U_{21} $ we have
\[ (A_{22}-A_{22}B_{21})(A_{22}^{-1}W+B_{21}+P_{1})=W \]
and
\[ (A_{22}-A_{22}B_{21})(A_{22}^{-1}W)=W. \]
By applying $\phi$ to these equations, we arrive at
\[ \phi(\left[A_{22}-A_{22}B_{21}, A_{22}^{-1}W+B_{21}+P_{1} \right])=\left[\phi(A_{22}-A_{22}B_{21}), A_{22}^{-1}W+B_{21}+P_{1} \right]  \]
and
\[ \phi([A_{22}-A_{22}B_{21},A_{22}^{-1}W])=[\phi(A_{22}-A_{22}B_{21}),A_{22}^{-1}W]. \]
Now, from these equations it follows that 
\[0=\phi(\left[A_{22}-A_{22}B_{21}, B_{21}+P_{1} \right] ) =\left[\phi(A_{22}-A_{22}B_{21}),B_{21}+P_{1} \right] \] 
and therefore 
\[ \psi(A_{22}B_{21})=\psi(A_{22})B_{21} \]
for every invertible element $ A_{22}\in \U_{22} $ and any $ B_{21}\in \U_{21} $. By the fact that each element of a Banach algebra is a sum of two invertible elements we get 
\[ \psi(A_{22}B_{21})=\psi(A_{22})B_{21} \]
for every $ A_{22}\in \U_{22} $ and  $ B_{21}\in \U_{21} $. Consequently 
\[\psi(A_{22})B_{21}=\psi(A_{22}B_{21})=\lambda A_{22}B_{21} \]
for every $ A_{22}\in \U_{22} $ and  $ B_{21}\in \U_{21} $. Since $\mathcal{B}(\X)$ is a prime algebra, it follows that 
\[\psi(A_{22})= \lambda A_{22}\]
for every $ A_{22}\in \U_{22} $. For every $ A_{11}\in \U_{11} $ and $ B_{21}\in \U_{21} $ we have
\[ (P_{2}+B_{21})(A_{11}-B_{21}A_{11}+W)=W \]
and
\[ (P_2 +B_{21})W=W. \]
By applying $\phi$ to these equations, we get
\[ \phi ([P_{2}+B_{21},A_{11}-B_{21}A_{11}+W])=[P_{2}+B_{21},\phi(A_{11}-B_{21}A_{11}+W)] \]
and
\[\phi([P_2 +B_{21},W])=[P_2 +B_{21}, \phi(W)]. \]
Comparing these equations, we see that
\[0=\phi ([P_{2}+B_{21},A_{11}-B_{21}A_{11}])=[P_{2}+B_{21},\phi(A_{11}-B_{21}A_{11})]  \]
and hence
\[\psi(B_{21}A_{11})=B_{21}\psi(A_{11}) \]
for every $ A_{11}\in \U_{11} $ and $ B_{21}\in \U_{21} $. So
\[B_{21}\psi(A_{11})= \psi(B_{21}A_{11})=\lambda B_{21}A_{11}\]
for every $ A_{11}\in \U_{11} $ and $ B_{21}\in \U_{21} $. Thus
\[\psi(A_{11})=\lambda A_{11} \]
for every $ A_{11}\in \U_{11} $. According to the obtained results, for each $A=A_{11}+A_{12}+A_{21}+A_{22}\in \mathcal{B}(\X)$ we have
\[\psi(A)=\psi(A_{11})+\psi(A_{12})+\psi(A_{21})+\psi(A_{22})=\lambda A_{11}+\lambda A_{12}+\lambda A_{21}+\lambda A_{22}=\lambda A. \]
\end{proof}
We are now in a position to complete the proof of (i)$\Rightarrow$(ii). According to the definition of $\psi$ and $\mu$, we have
\[  \phi (A)=\psi (A)  +\mu(A)=\lambda A + \mu(A) \]
for all $A\in \mathcal{B}(\X)$, where $\lambda \in \mathbb{C}$ and $ \mu : \mathcal{B}(\X) \to \mathbb{C}I$ is an additive mapping. For any $A,B\in \mathcal{B}(\X)$ with $AB=W$ we have
\begin{equation*}
\begin{split}
\mu([A,B])&=\phi([A,B])-\lambda [A,B] \\&
=[\phi(A),B]-\lambda [A,B] \\&
=[\mu(A),B]+[\lambda A, B]-\lambda [A,B] =0.
\end{split}
\end{equation*}
\par 
(ii)$\Rightarrow$(i): It is clear. $\quad\quad \quad \quad \quad\quad \quad \quad\quad\quad \quad \quad \quad\quad \quad \quad \quad\quad \quad \quad\quad\quad \blacksquare $
\\ \par 
We have the following corollaries from Theorem \ref{image}, which were obtained previously, and therefore this Theorem \ref{image} is a generalization of these previous results.
\begin{cor}\label{zero}
Let $\mathcal{B}(\mathcal{X})$ be the algebra of all bounded linear operators on a complex Banach space $\mathcal{X}$, and let $\phi:\mathcal{B}(\mathcal{X}) \rightarrow \mathcal{B}(\mathcal{X})$ be an additive mapping. Then the following are equivalent:
\begin{itemize}
\item[(i)] $\phi$ satisfies $(\textbf{L}_{0})$;
\item[(ii)] $ \phi (A) = \lambda A + \mu (A) $ for all $A\in \mathcal{B}(\mathcal{X})$, where $ \lambda \in \mathbb{C} $ and $\mu: \mathcal{B}(\mathcal{X})\rightarrow \mathbb{C}I$ is an additive mapping such that $ \mu ([A,B]) = 0$ for all $A,B \in \mathcal{B}(\mathcal{X})$ with $AB=0$.
\end{itemize}
\end{cor}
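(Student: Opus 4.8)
The plan is to obtain Corollary \ref{zero} as the special case $W=0$ of Theorem \ref{image}. The one point to verify is the hypothesis $\overline{W(\mathcal{X})}\neq\mathcal{X}$: when $W=0$ we have $W(\mathcal{X})=\{0\}$, so $\overline{W(\mathcal{X})}=\{0\}$, and since $\mathcal{X}$ is a nonzero Banach space this is properly contained in $\mathcal{X}$. (If one allows $\mathcal{X}=\{0\}$ the statement is vacuous, with $\lambda=0$ and $\mu=0$.) Also, condition $(\textbf{L}_{0})$ is by definition $(\textbf{L}_{W})$ with $W=0$, and the requirement $\mu([A,B])=0$ for all $A,B$ with $AB=0$ is the $W=0$ instance of the corresponding condition in Theorem \ref{image}.

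Granting this, the implication (i)$\Rightarrow$(ii) is immediate from Theorem \ref{image}. For (ii)$\Rightarrow$(i), suppose $\phi(A)=\lambda A+\mu(A)$ with $\lambda\in\mathbb{C}$ and $\mu:\mathcal{B}(\mathcal{X})\to\mathbb{C}I$ additive satisfying $\mu([A,B])=0$ whenever $AB=0$. Using that $\mu$ takes values in the center $\mathbb{C}I$, for any $A,B$ with $AB=0$ one has $\phi([A,B])=\lambda[A,B]+\mu([A,B])=\lambda[A,B]=[\lambda A+\mu(A),B]=[\phi(A),B]$, and symmetrically $\phi([A,B])=[A,\phi(B)]$; this is exactly the argument already used for (ii)$\Rightarrow$(i) in the proof of Theorem \ref{image}. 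There is no genuine obstacle here: the corollary is simply the reading of Theorem \ref{image} at the point $W=0$, and it recovers the earlier results of \cite{fad3} and \cite{ghahjing} mentioned in the introduction.
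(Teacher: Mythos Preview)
Your proof is correct and follows the same approach as the paper: it simply observes that for $W=0$ one has $\overline{W(\mathcal{X})}=\{0\}\neq\mathcal{X}$ and then invokes Theorem~\ref{image}. The only difference is that you spell out the easy (ii)$\Rightarrow$(i) direction in more detail than the paper does.
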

\begin{proof}
Let $W=0$. We have $\overline{W(\mathcal{X})}=0\neq \mathcal{X}$ and therefore, by Theorem \ref{image}, the result is clear.
\end{proof}
We recall that an idempotent $P\in \mathcal{B}(\mathcal{X})$ is non-trivial whenever $P\neq 0$ and $P\neq I$.
\begin{cor}\label{idempotent}
Let $\mathcal{B}(\mathcal{X})$ be the algebra of all bounded linear operators on a complex Banach space $\mathcal{X}$, and let $\phi:\mathcal{B}(\mathcal{X}) \rightarrow \mathcal{B}(\mathcal{X})$ be an additive mapping. Then the following are equivalent:
\begin{itemize}
\item[(i)] $\phi$ satisfies $(\textbf{L}_{P})$, where $P\in \mathcal{B}(\mathcal{X})$ is a non-trivial idempotent;
\item[(ii)] $ \phi (A) = \lambda A + \mu (A) $ for all $A\in \mathcal{B}(\mathcal{X})$, where $ \lambda \in \mathbb{C} $ and $\mu: \mathcal{B}(\mathcal{X})\rightarrow \mathbb{C}I$ is an additive mapping such that $ \mu ([A,B]) = 0$ for all $A,B \in \mathcal{B}(\mathcal{X})$ with $AB=P$.
\end{itemize}
\end{cor}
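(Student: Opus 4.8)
The plan is to deduce this corollary directly from Theorem \ref{image} applied with $W = P$; the only point that requires verification is that $\overline{P(\mathcal{X})} \neq \mathcal{X}$.

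First I would record that, since $P$ is a bounded idempotent, its range is closed. Indeed, $y \in P(\mathcal{X})$ means $y = Px$ for some $x \in \mathcal{X}$, and then $Py = P^2 x = Px = y$, so $y \in \ker(I - P)$; conversely $\ker(I - P) \subseteq P(\mathcal{X})$ trivially. Hence $P(\mathcal{X}) = \ker(I - P)$, which is closed as the kernel of a bounded operator, so $\overline{P(\mathcal{X})} = P(\mathcal{X})$.

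Next I would rule out $P(\mathcal{X}) = \mathcal{X}$: if every $y \in \mathcal{X}$ had the form $y = Px$, then $Py = P^2 x = Px = y$ for all $y$, forcing $P = I$, contradicting the hypothesis that $P$ is non-trivial. Consequently $\overline{P(\mathcal{X})} = P(\mathcal{X}) \subsetneq \mathcal{X}$, so the hypothesis of Theorem \ref{image} is satisfied with $W = P$, and the equivalence (i) $\Leftrightarrow$ (ii) follows at once (with the implication (ii) $\Rightarrow$ (i) being immediate in any case).

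Since the proof is purely reductive, there is no genuine obstacle here; the substance of the corollary is simply the observation that every non-trivial idempotent lies in the class of operators with non-dense range handled by Theorem \ref{image}, so that the earlier result of \cite{ghahmalay} (specialized to $\mathcal{B}(\mathcal{X})$) is recovered as a particular case.
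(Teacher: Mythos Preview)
Your proposal is correct and follows essentially the same approach as the paper: both reduce the corollary to Theorem \ref{image} by observing that a non-trivial idempotent $P$ has closed range $P(\mathcal{X}) = \ker(I-P) \subsetneq \mathcal{X}$, so that $\overline{P(\mathcal{X})} \neq \mathcal{X}$. The only difference is cosmetic --- the paper cites \cite[Theorem 13.2]{conway} for this fact, whereas you spell out the short argument directly.
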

\begin{proof}
Since $P$ is a non-trivial idempotent, $P(\X)$ is a closed subspace of $\X$ and $P(\X)\neq \X$ (see \cite[Theorem 13.2]{conway}). So $\overline{P(\mathcal{X})}\neq \mathcal{X}$ and therefore, by Theorem \ref{image}, the result is clear.
\end{proof}

\section{Proof of Theorem \ref{invertible}}
To prove this theorem, we first need to introduce commuting maps. Let $\mathcal{U}$ be an algebra. Recall that a map $\theta$ of $\mathcal{U}$ into itself is called a \textit{commuting map} if $[\theta(a),a]=0$ for all $a \in \mathcal{U}$. One can easily check that each Lie centralizer is a commuting map, but the converse is, in general, not true (see \cite[Example 2.3]{ghahjing}).Commuting maps have played a crucial role in the development of the theory of functional identities. We refer the reader to the survey paper \cite{bre2} in which the author presented the development of the theory of commuting maps and their applications.
\par 
As we mentioned in the introduction, the following lemma is interesting in itself.
\begin{lem}\label{banach}
Let $\U$ be a complex Banach algebra with unity $1$, and $w$ is an invertible element in $\mathcal{Z}(\U)$. If $\phi:\U \rightarrow \U$ is an additive mapping satisfying $\Ll$, then $\phi$ is a commuting map.
\end{lem}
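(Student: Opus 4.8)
The plan is to squeeze everything out of the only pairs whose Lie product we can control without knowing $\phi$, namely pairs of the form $(a,\,a^{-1}w)$ with $a$ invertible. Since $w$ is invertible and central, $a^{-1}w=wa^{-1}$, so $a(a^{-1}w)=w$ while $[a,a^{-1}w]=w-wa^{-1}a=0$. Thus $\Ll$ applies to the pair $(a,a^{-1}w)$ and gives
\[ 0=\phi(0)=\phi\bigl([a,a^{-1}w]\bigr)=[\phi(a),a^{-1}w]. \]
Writing this out and using the centrality of $w$, one gets $w\bigl(\phi(a)a^{-1}-a^{-1}\phi(a)\bigr)=0$; cancelling the invertible $w$ yields $\phi(a)a^{-1}=a^{-1}\phi(a)$, and multiplying by $a$ on both sides produces $[\phi(a),a]=0$ for every invertible $a\in\U$.

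The second step is a linearization to pass from invertible elements to arbitrary ones. Given any $a\in\U$, the spectrum $\sigma(a)$ is bounded, so $a+n1$ is invertible for all sufficiently large integers $n$. Applying the first step to $a+n1$ and expanding $\phi(a+n1)=\phi(a)+n\phi(1)$ by additivity, the contributions $[\phi(a),1]$ and $[\phi(1),1]$ vanish because everything commutes with $1$, leaving $[\phi(a),a]+n[\phi(1),a]=0$ for all large $n$. Comparing this identity for two consecutive large values of $n$ forces $[\phi(1),a]=0$, and then $[\phi(a),a]=0$, which is the assertion.

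I do not expect a genuine obstacle here: the argument is short. The only delicate point is that the constraint $ab=w$ is imposed on us, so essentially the whole conclusion must be extracted from the single identity $[\phi(a),a^{-1}w]=0$ for invertible $a$, combined with the standard fact that every element of a unital Banach algebra becomes invertible after adding a large scalar multiple of the unit. (One could instead use that every element is a sum of two invertible elements, but the scalar-shift version keeps $\phi(1)$ as the only auxiliary value that has to be eliminated, which makes the linearization cleanest.)
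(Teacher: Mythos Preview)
Your proof is correct and follows essentially the same route as the paper: exploit the pair $(a,a^{-1}w)$ for invertible $a$ to get $[\phi(a),a]=0$ on invertibles, then pass to arbitrary elements by the scalar shift $a\mapsto a+n1$. The only cosmetic difference is that the paper first isolates $\phi(1)\in\mathcal{Z}(\U)$ (via the shift applied to an invertible $a$, together with the sum-of-two-invertibles fact) and then treats general $a$, whereas you eliminate $[\phi(1),a]$ directly by comparing two consecutive values of $n$; both are equally valid.
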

\begin{proof}
For each invertible element $a\in \U$ we have $aa^{-1}w=w$. So 
\[[\phi(a),a^{-1}w]=0, \]
and hence $\phi(a)a^{-1}w-a^{-1}w\phi(a)=0$. In view of the fact that $w\in \mathcal{Z}(\U)$ is invertible, we conclude that
\begin{equation}\label{inverse}
\phi(a)a=a\phi(a).
\end{equation} 
We consider the non-negative integer $n>\Vert a \Vert$ for the invertible element $a\in \U$. So $n1-a$ is invertible and from \eqref{inverse} it follows that $\phi(n1-a)(n1-a)=(n1-a)\phi(n1-a)$, and hence $\phi(n1)a=a\phi(n1)$. Since $\phi$ is additive we get $\phi(1)a=a\phi(1)$ for all invertible element $a\in \U$. By the fact that each element of a Banach algebra is a sum of two invertible elements we conclude that $\phi(1)\in \mathcal{Z}(\U)$. 
\par 
Now, for an arbitrary element $a$ of $\U$, we consider the non-negative integer $n$ as $n>\Vert a \Vert$. Since $n1-a$ is invertible, it follows from \eqref{inverse} that $\phi(n1-a)(n1-a)=(n1-a)\phi(n1-a)$. Given that $\phi(1)\in \mathcal{Z}(\U)$, we see that $\phi(a)a=a\phi(a)$. Therefore, $\phi$ is a commuting map.
\end{proof}
From the proof of the above lemma, it also follows that on Banach algebras additive commuting maps in invertible elements are commuting maps.
\\ \\
\textit{Proof of Theorem \ref{invertible}.}
(i)$\Rightarrow$(ii): Every central and invertible member in $\mathcal{B}(\mathcal{X})$ is of the form $\xi I$, where $\xi\in \mathbb{C}$ and $I$ is the identity operator. Now, considering this point, it follows from Lemma \ref{banach} that $\phi$ is an additive commuting map. $\mathcal{B}(\mathcal{X})$ is a prime algebra and the extended centroid of $\mathcal{B}(\mathcal{X})$ is equal to $\mathcal{Z}(\mathcal{B}(\mathcal{X}))=\mathbb{C}I$. From \cite[Theorem A]{bre1} it follows that $\phi(A)=\lambda A +\mu(A)$ ($A\in \mathcal{B}(\mathcal{X})$), where $\lambda$ is in the extended centroid of $\mathcal{B}(\mathcal{X})$ and $\mu$ is an additive map from $\mathcal{B}(\mathcal{X})$ into the its extended centroid. Hence $\lambda \in \mathbb{C}$ and $\mu(\mathcal{B}(\mathcal{X})) \subseteq \mathbb{C}I$. Now for any $A,B\in \U$ with $AB=\xi I$ we have
\begin{equation*}
\begin{split}
\mu([A,B])&=\phi([A,B])-\lambda [A,B] \\&
=[\phi(A),B]-\lambda [A,B] \\&
=[\mu(A),B]+[\lambda A, B]-\lambda [A,B] =0.
\end{split}
\end{equation*}
\par 
(ii)$\Rightarrow$(i): It is clear. $\quad\quad \quad \quad \quad\quad \quad \quad\quad\quad \quad \quad \quad\quad \quad \quad \quad\quad \quad \quad\quad\quad \blacksquare $

\subsection*{Declarations}
\begin{itemize}
\item[•] \textbf{Author's contribution:} All authors contributed to the study conception and design and approved the final manuscript. 
\item[•] \textbf{Funding:} The authors declare that no funds, grants, or other support were received during the preparation of this manuscript.
\item[•] \textbf{Conflict of interest:} On behalf of all authors, the corresponding author states that there is no conflict of interest. 
\item[•] \textbf{Data availability:} Data sharing not applicable to this article as no datasets were generated or analysed during the current study. 
\end{itemize}

\subsection*{Acknowledgment}
The authors like to express their sincere thanks to the referee(s) for this paper.




\end{document}